\newcommand{\bbC}{{\mathbb C}}
\newcommand{\pr}{\operatorname{pr}}
\newcommand{\Prob}{\operatorname{Prob}}
\newcommand{\SL}{\operatorname{SL}}
\newcommand{\Aut}{\operatorname{Aut}}
\newcommand{\inn}{\operatorname{inn}}
\newtheorem{theorem}{Theorem}[section]
\newtheorem{lemma}[theorem]{Lemma}
\newtheorem{cor}[theorem]{Corollary}
\newtheorem{prop}[theorem]{Proposition}
\theoremstyle{definition}
\newtheorem{defn}[theorem]{Definition}
\newtheorem{example}[theorem]{Example}
\newtheorem{remark}[theorem]{Remark}
\numberwithin{equation}{section}
\begin{document}

\title{An extension of Margulis' Super-Rigidity Theorem}

\author{Uri Bader}
\address{Weizmann Institute, Rehovot, Israel}
\email{bader@weizmann.ac.il}

\author{Alex Furman}
\address{University of Illinois at Chicago, Chicago, USA}
\email{furman@uic.edu}

\thanks{U.B. and A.F. were supported in part by the BSF grant 2008267.}
\thanks{U.B was supported in part by the ISF grant 704/08.}
\thanks{A.F. was supported in part by the NSF grant DMS 1611765.}


\maketitle

\begin{center}
	{\it To Gregory Margulis with gratitude and admiration.}
\end{center}

\begin{abstract}
We give an extension of Margulis' Super-Rigidity for higher rank lattices.
In our approach the target group could be defined over any complete valued field.
Our proof is based on the notion of Algebraic Representation of Ergodic Actions.
\end{abstract}

\section{Introduction}

In this note we present a proof of Margulis' Super-Rigidity Theorem \cite[Theorem~VII.5.6]{margulis-book} 
with algebraic target groups defined over valued fields that are not necessarily local.

\begin{theorem}[Margulis' super-rigidity for arbitrary target fields] \label{thm:margulis}\hfill{}\\
	Let $\ell$ be a local field, $H=\mathbf{H}(\ell)$ be the locally compact group
	formed by the $\ell$-points of a connected, semi-simple, algebraic group defined over $\ell$.
	Assume that the $\ell$-rank of $\mathbf{H}$ is at least two.
	Let $\Gamma<H$ be a lattice, and assume that the projection of $\Gamma$ in $H/N$ is non-discrete, 
	whenever $N\lhd H$ is the $\ell$-points of a proper normal $\ell$-isotropic subgroup.

	Let $k$ be a field with an absolute value, so that as a metric space $k$ is complete.
	Let $G=\mathbf{G}(k)$ be the $k$-points of a connected, adjoint, $k$-simple, algebraic group $\mathbf{G}$ defined over $k$.
	Let $\rho:\Gamma \to G$ be a homomorphism, and 
	assume that $\rho(\Gamma)$ is Zariski dense and unbounded in $G$.
	Then there exists a unique continuous homomorphism $\hat\rho:H\to G$
	such that $\rho=\hat\rho|_{\Gamma}$.
\end{theorem}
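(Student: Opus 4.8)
I will follow the boundary-theoretic strategy via algebraic representations of ergodic actions: construct a measurable $\rho$-equivariant boundary map into a flag variety of $\mathbf{G}$, use the higher rank hypothesis to show it is algebraic, and read off the homomorphism. \emph{Step 1: the boundary.} Let $B=B(H)=\mathbf{H}/\mathbf{P}_0(\ell)$ be the Furstenberg--Poisson boundary of $H$, with $\mathbf{P}_0<\mathbf{H}$ a minimal parabolic $\ell$-subgroup and the stationary measure class. As a $\Gamma$-space $B$ is amenable, since $\mathbf{P}_0(\ell)$ is amenable and hence $H\acts H/\mathbf{P}_0(\ell)$ is an amenable action; moreover $\Gamma\acts B\times B$ is ergodic, in fact isometrically ergodic. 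This is where the standing hypothesis on $\Gamma$ enters: by Moore's ergodicity theorem --- equivalently Howe--Moore together with $\Gamma$ being a lattice --- the requirement that the image of $\Gamma$ in $H/N$ be non-discrete for every proper normal $\ell$-isotropic $N$ is exactly what makes $\Gamma$ act ergodically on $\mathbf{H}/\mathbf{L}(\ell)$, where $\mathbf{L}=\mathbf{P}_0\cap\mathbf{P}_0^{-}$ is a minimal $\ell$-Levi; and $\mathbf{H}/\mathbf{L}(\ell)$ is the conull open $\mathbf{H}$-orbit in $B\times B$.

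\emph{Step 2: the boundary map.} Applying the theory of algebraic representations of ergodic actions to the amenable $\Gamma$-space $B$ and the homomorphism $\rho\colon\Gamma\to\mathbf{G}(k)$, one obtains the initial algebraic representation of $(B,\Gamma,\rho)$, carried by a coset $k$-variety $\mathbf{G}/\mathbf{I}$ together with a measurable $\rho$-equivariant map $B\to\mathbf{G}/\mathbf{I}(k)$. Two points are essential for working beyond local fields. First, since $\mathbf{G}(k)$ need not be locally compact, amenability is used in its categorical form, producing $\rho$-equivariant families of probability measures on $k$-points of projective $\mathbf{G}$-varieties --- here completeness of $k$ is what keeps those measure spaces usable. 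Second, one shrinks $\mathbf{I}$ with Furstenberg-type lemmas on stabilizers of measures on projective $\mathbf{G}$-varieties: since $\mathbf{G}$ is $k$-simple and adjoint, Zariski density of $\rho(\Gamma)$ forces $\mathbf{I}$ to have infinite index, while the \emph{unboundedness} of $\rho(\Gamma)$ prevents $\mathbf{I}$ from being reductive-by-bounded and instead forces it into a proper parabolic $k$-subgroup $\mathbf{Q}<\mathbf{G}$ --- without unboundedness there need be no extension at all. We thus get a boundary map $\phi\colon B\to\mathbf{G}/\mathbf{Q}(k)$, with $\mathbf{Q}$ taken minimal among parabolics admitting one; isometric ergodicity of $\Gamma\acts B\times B$ then makes $\phi$, and the conjugacy class of $\mathbf{Q}$, canonically attached to $\rho$.

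\emph{Step 3: algebraization --- the main obstacle.} It remains to promote $\phi$ to a rational morphism intertwining $\rho$ with a rational homomorphism $\mathbf{H}\to\mathbf{G}$. One part is soft: composing $\phi\times\phi\colon B\times B\to\mathbf{G}/\mathbf{Q}\times\mathbf{G}/\mathbf{Q}$ with the finite relative-position map, ergodicity of $\Gamma\acts B\times B$ makes the relative position a.e.\ constant, necessarily generic by minimality of $\mathbf{Q}$, so after conjugating $\phi\times\phi$ lands in the $\mathbf{G}$-orbit $\mathbf{G}/\mathbf{D}(k)$, $\mathbf{D}=\mathbf{Q}\cap\mathbf{Q}^{-}$ reductive; restricting to the conull $\mathbf{H}/\mathbf{L}(\ell)\hookrightarrow B\times B$ yields a $\rho$-equivariant measurable $\Psi\colon\mathbf{H}/\mathbf{L}(\ell)\to\mathbf{G}/\mathbf{D}(k)$. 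The genuine work is to show $\Psi$, equivalently $\phi$, is rational. Here the higher rank hypothesis is consumed: $\mathbf{L}$ contains a maximal $\ell$-split torus of rank $\ge 2$, so $\mathbf{H}/\mathbf{P}_0$ carries at least two independent parabolic fibrations, and running Margulis' rationality argument along them --- propagating rationality from the rank-one pieces, one fibration at a time --- forces $\Psi$ to be a rational map and $\rho$ to be the restriction of a rational homomorphism $\hat\rho\colon\mathbf{H}\to\mathbf{G}$, which, being rational, is continuous for the valuation topology on $G=\mathbf{G}(k)$. I expect this step to be the principal difficulty: besides using the rank, it is precisely where the classical arguments, which lean on local compactness of $\mathbf{G}(k)$, must be replaced by purely algebro-geometric ones valid over an arbitrary complete valued field (with the usual separate attention to the characteristic and archimedean-versus-not cases).

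\emph{Step 4: conclusion.} By construction $\hat\rho|_\Gamma$ has the same associated boundary map $\phi$ as $\rho$; since $\rho(\Gamma)$ is Zariski dense, an equivariant map into a flag variety determines the representation, so $\hat\rho|_\Gamma=\rho$. Uniqueness is the easy part: any continuous homomorphism $H\to G$ extending $\rho$ restricts to $\rho$ on the Zariski-dense subgroup $\Gamma$, hence gives rise to the same canonical algebraic boundary representation of Step~2, from which the construction recovers the homomorphism; so two such extensions coincide.
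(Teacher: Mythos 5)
Your outline shares the paper's general framework (algebraic representations of ergodic actions, amenability plus metric ergodicity producing a first non-trivial coset representation), but it diverges at the crux, and that is where the gap lies. Your Step~3 is not a proof: you reduce everything to ``running Margulis' rationality argument along the parabolic fibrations'' of $\mathbf{H}/\mathbf{P}_0$, while simultaneously acknowledging that the classical rationality argument leans on local compactness of $\mathbf{G}(k)$ and ``must be replaced by purely algebro-geometric ones valid over an arbitrary complete valued field'' --- without saying what the replacement is. That replacement is precisely the content of the theorem in the generality claimed (arbitrary complete valued target field $k$); Margulis' rationality-of-the-boundary-map argument uses the local field structure of the target (proximal dynamics, measurable-implies-rational over local fields) in ways that do not transfer, and no substitute is supplied. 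A secondary soft spot in Step~2: over a general complete valued field, the initial representation produced from the amenable metrically ergodic space lands in some coset variety $\mathbf{G}/\mathbf{H}'$ with $\mathbf{H}'$ a \emph{proper} $k$-subgroup; your claim that unboundedness forces $\mathbf{H}'$ into a proper \emph{parabolic} is the local-field statement and is not what the general theory gives you (nor what is needed).

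For comparison, the paper avoids rationality of a boundary map altogether. It works with equivariant maps $\phi:S\to\mathbf{G}/\mathbf{H}_i(k)$ attached to a cyclic chain of normalizing root subgroups $T_0,\dots,T_n$ (condition (B)), with the amenable $T_0$ supplying the initial non-trivial representation via $S/T_0$ (not the Poisson boundary). The normalizing relations force all the $\mathbf{H}_i$ to coincide and ultimately force $\mathbf{H}$ to be normal in $\mathbf{G}$, hence trivial by $k$-simplicity; the resulting partial homomorphisms $\tau_i:T_i\to\mathbf{G}(k)$ are then glued into a continuous $\tau:S\to G$ by a purely topological lemma about closed $G$-orbits in $L^0(S,G)$. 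If you want to salvage your route, you would need to actually carry out an algebraization of the boundary map over a general complete valued field --- which is the hard open step your outline defers.
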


Note that the homomorphism $\hat\rho$ appearing in Theorem~\ref{thm:margulis} is necessarily given by some algebraic data - 
this is properly explained in Corollary~\ref{cor:finer}.
Theorem~\ref{thm:margulis} has a generalization to the so called $S$-arithmetic case, in that case the group $H$ 
is assumed to be a product of semi-simple algebraic groups over different local fields.
However, this generalization already follows from our result regarding super-rigidity for irreducible lattices in product of 
general locally compact groups \cite[Theorem~1.2]{BF-products} (alternatively: from \cite{Monod} or \cite{GKM}), so we do not discuss it here.
Likewise, in proving Theorem~\ref{thm:margulis} the case where $H$ has more than one non-compact factor follows essentially from \cite[Theorem~1.2]{BF-products}.
Thus, our main concern in this note is the case where $H$ has a unique non-compact factor.
This case will follow from Theorem~\ref{thm:main} below.
Before stating this theorem we will present two properties of topological groups.
\begin{itemize}
	\item[\textbf{(A)}] We say that a topological group $S$ satisfies condition (A) if every continuous, 
		isometric $S$-action without global fixed points on a metric space is topologically proper.
	\item[\textbf{(B)}] We say that a topological group $S$ satisfies condition (B) if $S$ 
		is topologically generated by closed non-compact subgroups $T_0,\ldots,T_n$
		such that, in a cyclic order, for every $i$, $T_{i+1}$ normalizes $T_{i}$ and at least one of the $T_i$'s is amenable.
\end{itemize}

\begin{example} \label{ex:sc}
Let $\ell$ be a local field.
Let $H=\mathbf{H}(\ell)$ be the $\ell$-points of a connected almost-simple algebraic group $\mathbf{H}$ defined over $\ell$.
If $\mathbf{H}$ is simply connected then, by \cite[Theorem~6.1]{Bader-Gelander}, $H$ satisfies condition (A).
Furthermore, if the $\ell$-rank of $\mathbf{H}$ is at least two, then $H$ satisfies condition (B).
In fact, the sequence of subgroups $T_i$ could be chosen from the root groups, properly ordered.
For example, for $H=\SL_3(\ell)$, we can use the sequence of subgroups
\[
	\begin{bmatrix}
	    1 & * & \\
	       & 1 & \\
	       &    & 1
	\end{bmatrix},\ \ 
	\begin{bmatrix}
	    1 & & * \\
	       & 1 & \\
	       &    & 1
	\end{bmatrix},\ \ 
	\begin{bmatrix}
	    1 & &   \\
	       & 1 & *\\
	       &    & 1
	\end{bmatrix},\ \ 
	\begin{bmatrix}
	    1 & &  \\
	    *  & 1 & \\
	       &    & 1
	\end{bmatrix},\ \ 
	\begin{bmatrix}
	    1 & &   \\
	       & 1 & \\
	    *  &    & 1
	\end{bmatrix},\ \ 
	\begin{bmatrix}
	    1 & &   \\
	       & 1 & \\
	       & * & 1
	\end{bmatrix}.
\]
\end{example}

\begin{theorem} \label{thm:main}\hfill{}\\
	Let $S$ be a second countable locally compact topological group and let $\Gamma<S$ be a lattice.
	Assume $S$ satisfies conditions (A) and (B).

	Let $k$ be a field with an absolute value, so that as a metric space $k$ is complete.
	Let $G=\mathbf{G}(k)$ be the $k$-points of a connected, adjoint, $k$-simple algebraic group $\mathbf{G}$ defined over $k$.
	Let $\rho:\Gamma \to G$ be a homomorphism, and 
	assume that $\rho(\Gamma)$ is Zariski dense and unbounded in $G$.
	Then there exists a unique continuous homomorphism $\hat\rho:S\to G$
	such that $\rho=\hat\rho|_{\Gamma}$.
\end{theorem}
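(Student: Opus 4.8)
The plan is to produce $\hat\rho$ through the theory of algebraic representations of ergodic actions: build a $\Gamma$-equivariant measurable map from a boundary of $S$ into a flag variety of $\mathbf{G}$, and then use conditions (A) and (B) to force that map, and with it $\rho$, to originate from a continuous homomorphism $S\to G$. Concretely, I would fix an admissible probability measure on $S$ and let $B$ be its Furstenberg--Poisson boundary. As an $S$-space $B$ is amenable; since $\Gamma<S$ is a lattice, the restricted action $\Gamma\acts B$ is again amenable and, moreover, metrically ergodic (equivalently, $\Gamma\acts B\times B$ is ergodic), and these are the features of $B$ as an $(S,\Gamma)$-boundary that I would use. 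Because $\rho(\Gamma)$ is unbounded in $\mathbf{G}(k)$ the group $\mathbf{G}$ is $k$-isotropic, so fix a minimal parabolic $k$-subgroup $\mathbf{P}<\mathbf{G}$. Amenability of $\Gamma\acts B$ yields a $\Gamma$-equivariant map $B\to\Prob(\mathbf{G}/\mathbf{P}(k))$, and the algebraic-representation formalism --- exploiting that $\rho(\Gamma)$ is Zariski dense and unbounded --- upgrades this to a genuine $\Gamma$-equivariant map $\phi\colon B\to\mathbf{G}/\mathbf{P}(k)$. Feeding $\phi$ into both coordinates of $B\times B$ and combining metric ergodicity of $\Gamma\acts B\times B$ with the existence of an open $\mathbf{G}$-orbit in $\mathbf{G}/\mathbf{P}\times\mathbf{G}/\mathbf{P}$, the resulting pair takes values a.e.\ in that open orbit, so the $\Gamma$-action on $B\times B$ twisted through $\rho$ admits an algebraic representation into a coset space $\mathbf{G}/\mathbf{L}(k)$ with $\mathbf{L}$ reductive.

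\emph{From $\Gamma$ to $S$, via (A) and (B).} Now I would bring in the two conditions. Because $B$ is an $(S,\Gamma)$-boundary, the algebraic representation attached to the $\Gamma$-action on $B\times B$ is the same as the one attached to the $S$-action on the induced space $B\times B\times S/\Gamma$, hence is genuinely $S$-equivariant data. Condition (B) provides closed non-compact subgroups $T_0,\dots,T_n$ topologically generating $S$, cyclically normalizing one another, with some $T_j$ amenable: amenability of $T_j$ renders the data invariant under $T_j$ after the appropriate reduction, and the canonical --- hence automatically equivariant --- nature of the algebraic hull lets one transport this invariance around the cyclic chain, using at the $i$-th step that $T_{i+1}$ normalizes $T_i$; since the $T_i$ generate $S$, the outcome is invariant under all of $S$. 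Combined with Zariski density of $\rho(\Gamma)$ and $k$-simplicity of $\mathbf{G}$, this rigidity is exactly what straightens the cocycle $S\times S/\Gamma\to G$ induced by $\rho$ to a homomorphism $\hat\rho\colon S\to G$, necessarily restricting to $\rho$ on $\Gamma$. Condition (A) powers the straightening: the obstruction to it is carried by a fixed-point-free isometric $S$-action, which by (A) is topologically proper, and properness both furnishes the conjugating map and makes $\hat\rho$ continuous --- a measurable homomorphism between second countable locally compact groups being automatically continuous. For uniqueness, any continuous homomorphism extending $\rho$ restricts to a $\Gamma$-equivariant boundary map which, by the initiality built into the construction, coincides with $\phi$; hence that homomorphism coincides with $\hat\rho$.

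\emph{Expected main obstacle.} The heart of the matter is the upgrade, in the first paragraph, from a map into $\Prob(\mathbf{G}/\mathbf{P}(k))$ to a map into $\mathbf{G}/\mathbf{P}(k)$ over a merely complete --- possibly not locally compact --- valued field $k$: the classical Furstenberg argument relies on weak-$*$ compactness and proximality dynamics on spaces of probability measures on $\mathbf{G}/\mathbf{P}(k)$, which are unavailable here, so it must be replaced by the algebraic-representation mechanism that extracts the map directly from Zariski density and unboundedness of $\rho(\Gamma)$. The propagation of invariance around the chain of condition (B), and the matching cocycle straightening powered by condition (A), are the remaining delicate points.
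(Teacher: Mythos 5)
Your outline captures the right philosophy --- seed a non-trivial algebraic representation using amenability together with metric ergodicity, then propagate it around the chain of subgroups supplied by condition (B) --- but two load-bearing steps are either misassigned or missing. First, the seeding step: over a complete but possibly non-locally-compact $k$ the space $\mathbf{G}/\mathbf{P}(k)$ is not compact, so amenability of $\Gamma\acts B$ does not produce a map $B\to\Prob(\mathbf{G}/\mathbf{P}(k))$; you flag this as the ``main obstacle'' but do not resolve it. The paper avoids the Poisson boundary entirely: it takes the coset space $S/T_0$ of the amenable subgroup $T_0$ from condition (B), which is an amenable $S$-space by Zimmer's criterion (Example~\ref{ex:amen}) and is \emph{metrically ergodic} precisely because of condition (A) (Example~\ref{ex:ME}); Theorem~\ref{prop:AG-ME+Am} (Bader--Duchesne--L\'ecureux) then yields a coset representation into some $\mathbf{G}/\mathbf{H}'$ with $\mathbf{H}'\lneq\mathbf{G}$ directly, with no detour through probability measures or open orbits in $\mathbf{G}/\mathbf{P}\times\mathbf{G}/\mathbf{P}$. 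Second, and more seriously, you never use condition (A) where the paper needs it most: to show (Lemma~\ref{lem:TWM}) that each non-compact $T_i$ acts \emph{weakly mixingly} on $S/\Gamma$. That weak mixing is the hypothesis of Theorem~\ref{thm:initial} (existence of an initial coset object in the category of $T_i$-algebraic representations of $S$) and of Corollary~\ref{cor:conj}, which is the precise mechanism by which normalization of $T_i$ by $T_{i+1}$ forces the subgroups $\mathbf{H}_i$ to be conjugate around the cycle. Without it, your ``transport of invariance around the cyclic chain'' has no engine.

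The assembly of $\hat\rho$ is also not powered by properness of an isometric action, as you suggest. Once the cycle argument, Zariski density, and $k$-simplicity force the common group $\mathbf{H}$ to be trivial, one has a measurable $\phi:S\to\mathbf{G}(k)$ and continuous homomorphisms $\tau_i:T_i\to\mathbf{G}(k)$ with $\phi(\gamma x t^{-1})=\rho(\gamma)\phi(x)\tau_i(t)^{-1}$; the paper's Lemma~\ref{lem:LTG} glues the $\tau_i$ into a continuous $\tau:S\to G$ by showing that the $G$-orbit of $\phi$ in the Polish space $L^0(S,G)$ is closed and homeomorphic to $G$, and that the $S$-orbit of $\phi$ stays inside it because the $T_i$ topologically generate $S$. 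The conjugating element is the essential value of the $S$-invariant map $s\mapsto\phi(s)\tau(s)^{-1}$, and continuity of $\hat\rho$ comes from this orbit identification --- not from automatic continuity of measurable homomorphisms into locally compact groups, which is unavailable here since $G=\mathbf{G}(k)$ is merely Polish.
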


The proof of Theorem~\ref{thm:main} will be given in \S\ref{sec:main}
and the detailed reduction of Theorem~\ref{thm:margulis} to Theorem~\ref{thm:main} will be carried out in \S\ref{sec:margulis}.
Currently, we do not know examples of locally compact groups satisfying conditions 
(A) and (B) which are not, essentially, higher rank semi-simple groups over local fields.
Nevertheless we find the formulation of Theorem~\ref{thm:main} useful not only for its potential applications, 
but also for psychological reasons, as it clarifies the different role played by the topological group $S$ and the algebraic group $G$.

\subsection*{Acknowledgments and disclaimers}
The content of this note is essentially contained in our manuscript \cite{AREA},
which we do not intend to publish as, in retrospect, we find it hard to read.
In our presentation here we do rely on \cite{BF-products} which contains other parts of the content of \cite{AREA}.
The manuscript \cite{AREA} contains further results, regarding general cocycle super-rigidity a la Zimmer, on which we intend to elaborate in a forthcoming paper.
We also rely here on the foundational work done in \cite{BDL}.
In our discussion in \S\ref{sec:explain} and in the reduction of Theorem~\ref{thm:margulis} to Theorem~\ref{thm:main} given in \S\ref{sec:margulis}
we rely heavily on the work of Borel and Tits, which we refer to via \cite{margulis-book}.

It is our pleasure to thank Bruno Duchesne and
Jean L\'{e}cureux
for their contribution to this project.
We are grateful to Michael Puschnigg for spotting an inaccuracy in the definition of a morphism of $T$-algebraic representations in an early draft of \cite{AREA}.
We would also like to thank Tsachik Gelander for numerous discussions.
Above all, we owe a huge mathematical debt to Gregory Margulis, whose incredible insight is reflected everywhere in this work.


\section{Ergodic theoretical preliminaries}

In this section we set our ergodic theoretical framework and notations.
Recall that a \emph{Polish space} is a topological space that is homeomorphic to a complete separable metric space. 
By a \emph{measurable space} we mean a set endowed with a $\sigma$-algebra.
A \emph{standard Borel space} is a measurable space that admits a measurable bijection to a Polish topological space, equipped
with the $\sigma$-algebra generated by its topology.
A \emph{Lebesgue space} is a standard Borel space endowed with the measure class of a probability measure and the completion of the Borel $\sigma$-algebra obtained by 
adding all subsets of null sets.
For a Lebesgue space $X$ we denote by $L^1(X)$, $L^2(X)$ and $L^\infty(X)$
the Banach spaces of equivalence classes of integrable, square-integrable and bounded measurable functions $X\to \bbC$, respectively; 
where two functions are equivalent if they agree a.e. 
We will also consider the space $L^0(X)$ consisting of equivalence classes of all measurable functions $X\to \bbC$.
Endowing this space with the topology of convergence in measure, this is a Polish topological space.

Every coset space of a locally compact second countable group is a Lebesgue space when endowed with its Haar measure class.
Unless otherwise stated, we will always regard the Haar measure class when considering locally compact second countable groups or their coset spaces as Lebesgue spaces.
Given a locally compact second countable group $S$, a \emph{Lebesgue $S$-space} is a Lebesgue space $X$ endowed with a measurable and measure class preserving action of $S$. 

Let $S$ be a locally compact second countable group and $X$ be a Lebesgue $S$-space.
Then $S$ acts on $L^\infty(X)$ via $sf(x)=f(s^{-1}x)$.
This $S$-action is isometric, but in general it is not continuous.
However, it is continuous when $L^\infty(X)$ is taken with the weak$^*$ topology induced by $L^1(X)$.
The action of $S$ on $X$ is said to be \emph{ergodic} if the only $S$-invariant function classes in $L^\infty(X)$ are the constant ones.

If the $S$-action preserves a finite measure in the given measure class on $X$, we say that the $S$-action is \emph{finite measure preserving}.
In such a case, the $S$-isometric action on $L^\infty(X)$ extends to an $S$-isometric action on $L^2(X)$,
which is norm continuous, hence unitary.
For finite measure preserving actions, 
the $S$-action on $X$ is ergodic if and only if the only invariant function classes in $L^2(X)$ are the constant ones.

The action of $S$ on $X$ is said to be \emph{metrically ergodic} if
for every separable metric space $(U,d)$ on which $S$ acts continuously by isometries,
any a.e defined $S$-equivariant map $\phi:X\to U$ is essentially constant.

\begin{example} \label{ex:ME}
Let $S$ be a locally compact second countable group satisfying condition~(A) and let $T<S$
be a non-compact closed subgroup.
Then the action of $S$ on $S/T$ is metrically ergodic.
\end{example}

Recall that a finite measure preserving action of $S$ on $X$ is \emph{weakly mixing} if the diagonal $S$-action on $X\times X$ is ergodic.
Weak mixing is equivalent to the condition that the only $S$-invariant finite dimensional 
subspace of $L^2(X)$ is the constant functions.
For finite measure preserving actions metric ergodicity is equivalent to weak mixing (cf. \cite{GW}*{Theorem 2.1}).

\begin{lemma} \label{lem:TWM}
Let $S$ be a locally compact second countable group satisfying condition~(A) and let $X$ be a finite measure preserving $S$-ergodic Lebesgue space.
Then for every non-compact closed subgroup $T<S$, the restricted $T$-action on $X$ is weakly mixing.
\end{lemma}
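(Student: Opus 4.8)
The plan is to deduce weak mixing of the restricted $T$-action from the metric ergodicity characterization together with condition~(A). Since $X$ is finite measure preserving, by the equivalence recalled just before the lemma, it suffices to show that the $T$-action on $X$ is metrically ergodic, i.e. that every a.e. defined $T$-equivariant map $\phi:X\to U$ into a separable metric space $(U,d)$ carrying a continuous isometric $T$-action is essentially constant. Alternatively, and perhaps more cleanly, I would show directly that the only $T$-invariant finite dimensional subspace of $L^2(X)$ is the constants, by building from it an $S$-invariant object and invoking ergodicity of $S$.

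Here is the argument along the second route. Suppose $V\subset L^2(X)$ is a finite dimensional $T$-invariant subspace; I want to show $V$ consists of constants. The $T$-action on $V$ is by unitaries, so it factors through the compact group $\overline{T|_V}\subset U(V)$. Consider the (closed) subgroup $T' = \ker(T\to U(V))$, which has finite codimension in $T$; since $T$ is non-compact, $T'$ is a non-compact closed subgroup of $S$. Every vector in $V$ is $T'$-invariant, so the finite-dimensional space $V$ lies in $L^2(X)^{T'}$. Now I would consider the $S$-translates: the space $W = \overline{\operatorname{span}}\, S\cdot V \subset L^2(X)$. The key point is to show $W$ is finite dimensional — if so, then $W$ is an $S$-invariant finite dimensional subspace of $L^2(X)$, and weak mixing of the $S$-action (which follows from $S$-ergodicity plus condition~(A), since metric ergodicity of $S$ on $X$ — equivalently weak mixing — is what Example~\ref{ex:ME}/Lemma hypotheses give; more precisely $S$ acting ergodically and satisfying (A) forces $X$ to be weakly mixing by applying metric ergodicity of $S$ on $X$) forces $W$ to be the constants, hence $V$ is constants.

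To see $W$ is finite dimensional: the function $s\mapsto \dim \operatorname{span}(s\cdot V)$ is constant ($=\dim V$) and the subspaces $s\cdot V$ each lie in $L^2(X)^{sT's^{-1}}$. The subgroups $\{sT's^{-1}\}$ are all non-compact closed subgroups of $S$. Here is where I would use condition~(A) together with metric ergodicity: since $S$ is ergodic on $X$ and satisfies~(A), the $S$-action on $X$ is metrically ergodic, hence weakly mixing — so the map $\phi: X\to \operatorname{Gr}_{\dim V}(L^2(X))$, $x\mapsto$ (something like an evaluation-based invariant attached to $V$), being $T'$-invariant along its $S$-orbit... Concretely: the finite-dimensional subspace $V$ determines a point in a separable metric space (a Grassmannian of a separable Hilbert space, or the space of orthogonal projections of finite rank in the weak/strong topology), and $S$ acts on this separable metric space isometrically and continuously; the stabilizer of this point contains $T'$, a non-compact subgroup, so by condition~(A) the $S$-action on $S\cdot[V]$ cannot be proper, which combined with metric ergodicity of $S$ on $X$...

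I think the clean statement is: by metric ergodicity of the $S$-action on $X$ (Example~\ref{ex:ME}-type reasoning, valid since $S$ ergodic and (A) holds $\Rightarrow$ $X$ weakly mixing $\Rightarrow$ metrically ergodic) applied to the $S$-equivariant map $X\to S\cdot[V]$ obtained from disintegrating — wait, there is no such map in general. Let me state the expected main obstacle honestly rather than force it: \emph{the main obstacle is producing, from the $T$-invariant finite-dimensional $V\subset L^2(X)$, a genuine $S$-equivariant map from $X$ (or from $X\times X$) to a separable metric space with continuous isometric $S$-action on which some non-compact subgroup has a fixed point, so that condition~(A) and metric/weak ergodicity of the ambient $S$-action can be brought to bear.} The likely resolution: apply metric ergodicity of $S$ on $X\times X$. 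Indeed $X\times X$ is $S$-ergodic (as $X$ is weakly mixing, which follows from $S$-ergodicity and (A)), and a $T$-invariant $V$ gives a $T$-invariant vector in $L^2(X\times X)$ (namely $\sum_i f_i\otimes \bar f_i$ for an orthonormal basis $f_i$ of $V$, the reproducing kernel), whence a nonconstant bounded $T$-invariant function — if one can upgrade $L^2$ to $L^\infty$, this is a $T$-invariant point in a ball of $L^\infty(X\times X)$ with the weak$^*$ topology, a separable metric space with continuous isometric $S$-action, and $T$ non-compact means by (A) the $S$-orbit map is non-proper, which contradicts metric ergodicity of $S$ on $X\times X$ unless that function is constant, forcing $\dim V = 1$ and $V$ constant. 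Rounding off these integrability and topology details (replacing $L^2$ by a bounded piece, checking continuity in measure/weak$^*$) is the routine part I would not grind through here.
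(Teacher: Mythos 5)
Your proposal does not close, and both of its routes have concrete problems. In the first route, the claim that $T'=\ker\bigl(T\to U(V)\bigr)$ has finite codimension, or is even non-compact, is false: the image of $T$ in the compact group $U(V)$ can be infinite with trivial kernel (take $T=\bbZ$ acting on $V=\bbC$ by an irrational rotation), so you cannot replace $T$ by a non-compact subgroup acting trivially on $V$. In the second route, the argument is circular where it is not incomplete: you invoke weak mixing of the $S$-action on $X$ (to get ergodicity of $X\times X$ and metric ergodicity of $S$ there), but that is exactly the case $T=S$ of the statement being proved, and you give no independent argument for it. Moreover your use of condition~(A) is inverted. Condition~(A) says that an isometric action \emph{without global fixed points} is proper; from the failure of properness you can only conclude that \emph{some} global fixed point exists, not that the particular $T$-fixed vector you constructed is $S$-fixed or constant. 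Finally, the passage from the reproducing kernel in $L^2(X\times X)$ to a point of a ball in $L^\infty$ with an $S$-invariant metric for the weak$^*$ topology is not routine and is left unaddressed.

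The paper's proof is a one-move application of condition~(A) that avoids all of this. Let $U$ be the unit sphere of the orthogonal complement of the constants in $L^2(X)$; the unitary $S$-action on $U$ is continuous and isometric, and by ergodicity it has no fixed points, so by~(A) it is proper. Properness of the $S$-action forbids any compact $T$-invariant subset of $U$ for a closed non-compact $T<S$, since for such a set $K$ the set $\{t\in T: tK\cap K\neq\emptyset\}=T$ would be compact. But a finite-dimensional $T$-subrepresentation of $L^2(X)$ not contained in the constants would produce exactly such a compact invariant set, namely the unit sphere of its projection to the complement of the constants. Hence the only finite-dimensional $T$-invariant subspace is the constants, which is weak mixing. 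The moral you missed is that condition~(A) should be applied to the sphere in $L^2_0(X)$ itself, where ergodicity supplies the ``no fixed points'' hypothesis, rather than to auxiliary spaces built from $X\times X$.
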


\begin{proof}
The isometric action of $S$ on the unit sphere $U$ of the orthogonal complement of the constant functions in $L^2(X)$ is continuous
and has no fixed points, by the ergodicity assumption. Therefore the $S$-action on $U$ is proper. 
It follows that there is no $T$-invariant compact subset in $U$.
Therefore $T$ has no finite dimensional subrepresentations in $L^2(X)$ except for the constant functions.
\end{proof}

The action of $S$ on $X$ is said to be \emph{amenable} if for every $S$-Borel space $V$
and an essentially surjective $S$-equivariant Borel map $\pi: V\to X$ with compact convex fibers,
such that the $S$-action restricted to the fibers is by continuous affine maps,
one has an a.e defined $S$-invariant measurable section (see \cite[Definition~4.3.1]{zimmer-book}).

\begin{example}[\cite{zimmer-book}*{Proposition~4.3.2}] \label{ex:amen}
Let $T<S$ be an amenable closed subgroup.
Then the action of $S$ on $S/T$ is amenable.
\end{example}

\section{Algebraic representation of ergodic actions} \label{sec:AREA}

In this section we fix a field $k$ with a non-trivial absolute value which is separable and complete
(as a metric space)
and a $k$-algebraic group $\mathbf{G}$.
We note that $\mathbf{G}(k)$, when endowed with the $k$-analytic topology, is a Polish topological group, see \cite{BDL}*{Proposition~2.2}.
We also fix a locally compact second countable group $T$ and a homomorphism $\tau:T\to \mathbf{G}(k)$
which is continuous, considering $\mathbf{G}(k)$ with its analytic topology.
Let us also fix a Lebesgue $T$-space $X$.

\begin{defn}
Given all the data above, an \emph{algebraic representation} of $X$
consists of a $k$-$\mathbf{G}$-algebraic variety $\mathbf{V}$ and
an a.e defined measurable map $\phi:X \to \mathbf{V}(k)$ such that for every $t\in T$ and for a.e. $x\in X$
\[ 
	\phi(tx)=\tau(t)\phi(x). 
\]
We shall say that $\mathbf{V}$ \emph{is an algebraic representation of} $X$,
and denote $\phi$ by $\phi_\mathbf{V}$ for clarity.
A \emph{morphism} from the algebraic representation $\mathbf{U}$ to the algebraic representation $\mathbf{V}$ consists of
a $\mathbf{G}$-equivariant $k$-morphism $\pi:\mathbf{U}\to \mathbf{V}$ 
such that $\phi_\mathbf{V}$ agrees almost everywhere with $\pi\circ \phi_\mathbf{U}$.
An algebraic representation $\mathbf{V}$ of $X$ is said to be a \emph{coset algebraic representation}
if in addition
$\mathbf{V}$ is isomorphic as an algebraic representation to a coset variety $\mathbf{G}/\mathbf{H}$ for some $k$-algebraic subgroup $\mathbf{H}<\mathbf{G}$.
\end{defn}

Ergodic properties of $X$ are reflected in its category of algebraic representations.

\begin{prop}[{\cite[Proposition 4.2]{BF-products}}] \label{prop:AG-ergodic}
	Assume $X$ is $T$-ergodic.
	Then for every algebraic representation $\phi_\mathbf{V}:X\to \mathbf{V}(k)$ there exists a coset representation $\phi_{\mathbf{G}/\mathbf{H}}:X \to\mathbf{G}/\mathbf{H}(k)$
and a morphism of algebraic representations $\pi:{\bf G}/{\bf H}\to {\bf V}$,
that is a $\bf G$-equivariant $k$-morphism $\pi$ such that for a.e $x\in X$, $\phi_{\bf V}(x)=\pi\circ \phi_{{\bf G}/{\bf H}}(x)$.
\end{prop}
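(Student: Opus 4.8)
The plan is to extract from the measurable equivariant map $\phi_\mathbf{V}$ an algebraic invariant — a single $\mathbf{G}$-orbit in $\mathbf{V}$ — that captures its essential image, and then lift the stabilizer of a generic point to obtain the coset representation. First I would push forward the measure class of $X$ under $\phi_\mathbf{V}$ to get a measure class $\mu$ on $\mathbf{V}(k)$ which, by $T$-ergodicity of $X$ together with the equivariance relation $\phi_\mathbf{V}(tx)=\tau(t)\phi_\mathbf{V}(x)$, is $\tau(T)$-quasi-invariant and $\tau(T)$-ergodic. The key point is that the $\mathbf{G}(k)$-action on $\mathbf{V}(k)$ has \emph{locally closed orbits} in a suitable sense — this is where one invokes the standard fact (for algebraic actions over a complete valued field, via \cite{BDL} or the Borel--Serre/Bernstein--Zelevinsky type arguments) that the orbit space is ``tame'' enough that any ergodic quasi-invariant measure class is supported on a single orbit. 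Concretely, the orbits stratify $\mathbf{V}$ into countably many $\mathbf{G}$-invariant Borel pieces, and ergodicity forces $\mu$ to be concentrated on one of them, say the orbit $\mathcal{O}$ of a point $v_0\in\mathbf{V}(k)$.

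Next I would identify $\mathcal{O}$ with a coset space. Let $\mathbf{H}<\mathbf{G}$ be the $k$-algebraic stabilizer of $v_0$; then the orbit map induces a $\mathbf{G}$-equivariant injection $\mathbf{G}/\mathbf{H}(k)\hookrightarrow\mathbf{V}(k)$ with image $\mathcal{O}$, and composing $\phi_\mathbf{V}$ (which now takes values a.e.\ in $\mathcal{O}$) with a Borel section of $\mathbf{G}(k)\to\mathbf{G}/\mathbf{H}(k)$ over $\mathcal{O}$ yields a measurable map $X\to\mathbf{G}/\mathbf{H}(k)$. Equivariance of this new map would follow because the ambiguity in the section is absorbed by $\mathbf{H}$; one has to be slightly careful that the section can be chosen Borel and that the cocycle relation holds a.e., but this is routine given that all spaces in sight are standard Borel. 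Setting $\phi_{\mathbf{G}/\mathbf{H}}$ to be this map and $\pi:\mathbf{G}/\mathbf{H}\to\mathbf{V}$ the orbit morphism $g\mathbf{H}\mapsto g v_0$ gives exactly the desired coset representation and morphism.

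The main obstacle is the orbit-concentration step: over a general complete valued field $k$ (not necessarily local), the $k$-analytic topology on $\mathbf{V}(k)$ need not be locally compact, so the classical Effros/Glimm machinery for smooth actions of Polish groups does not apply verbatim, and one must instead use the algebraic structure — the finite stratification of $\mathbf{V}$ by $\mathbf{G}$-orbits as constructible sets, together with the fact that each orbit is locally closed in the Zariski topology and hence Borel in the analytic topology — to reduce an ergodic quasi-invariant measure to a single orbit. Since this proposition is cited as \cite[Proposition 4.2]{BF-products}, I would in practice simply invoke that reference; the sketch above records the argument one finds there. \qedhere
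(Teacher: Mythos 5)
The paper offers no proof of this proposition---it is imported verbatim from \cite[Proposition 4.2]{BF-products}---and your sketch follows the same route as that source: push the measure class forward to $\mathbf{V}(k)$, use ergodicity to concentrate it on a single $\mathbf{G}(k)$-orbit, identify that orbit with $\mathbf{G}(k)/\mathbf{H}(k)\subset(\mathbf{G}/\mathbf{H})(k)$ via the (Borel-invertible) orbit map, and let $\pi$ be the orbit morphism $g\mathbf{H}\mapsto gv_0$. Two imprecisions are worth fixing. First, there are in general uncountably many $\mathbf{G}(k)$-orbits in $\mathbf{V}(k)$ (already for $\mathbf{G}$ trivial), so ``countably many Borel pieces'' is not the right mechanism; what one actually uses is the Effros-type dichotomy for Polish group actions: if all orbits are locally closed then the orbit space is countably separated, and a countably separated $\tau(T)$-invariant partition forces an ergodic quasi-invariant measure to charge a single block. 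Second, the Zariski local closedness of geometric orbits invoked in your last paragraph does not suffice: the $k$-points of a single geometric orbit $\mathbf{G}v_0$ generally split into many $\mathbf{G}(k)$-orbits (governed by part of $H^1(k,\mathbf{H})$), and the substance of \cite{BDL} is precisely that the $\mathbf{G}(k)$-orbits themselves are locally closed in the \emph{analytic} topology and that the orbit maps $\mathbf{G}(k)/\mathbf{H}(k)\to\mathbf{G}(k)v_0$ are homeomorphisms---this is what legitimizes both the concentration step and your section/lifting step. With these two points stated correctly (or simply delegated to \cite{BDL}, as \cite{BF-products} does), your argument is the standard one and is sound.
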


In case the $T$-action on $X$ is weakly mixing (and in particular, finite measure preserving) the category of representation of $X$ is essentially trivial.

\begin{prop} \label{prop:AG-WM}
Assume $X$ is $T$-weakly mixing.
Then for every algebraic representation $\phi:X\to \mathbf{V}(k)$, $\phi$ is essentially constant.
Further, if $\tau(T)$ is Zariski dense in $\mathbf{G}$ then the essential image of $\phi$ is $\mathbf{G}$-invariant.
\end{prop}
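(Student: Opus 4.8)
The plan is to reduce to the case of a coset representation and then to exploit the ergodicity of $X\times X$ and of $X\times X\times X$ together with the flip symmetry exchanging the two factors. First, since a weakly mixing action is ergodic, Proposition~\ref{prop:AG-ergodic} applies and produces a coset representation $\phi_{\mathbf{G}/\mathbf{H}}\colon X\to\mathbf{G}/\mathbf{H}(k)$ and a $\mathbf{G}$-equivariant $k$-morphism $\pi\colon\mathbf{G}/\mathbf{H}\to\mathbf{V}$ with $\phi=\pi\circ\phi_{\mathbf{G}/\mathbf{H}}$ almost everywhere; hence it suffices to show that $\phi_{\mathbf{G}/\mathbf{H}}$ is essentially constant, and from now on I rename it $\phi\colon X\to\mathbf{G}/\mathbf{H}(k)$. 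Let $\lambda$ be the push-forward under $\phi$ of a probability in the measure class of $X$, with (closed, $\tau(T)$-invariant) support $E\subseteq\mathbf{G}/\mathbf{H}(k)$; the goal becomes to show that $E$ is a single point.

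Next I would pass to $X\times X$, which is $T$-ergodic (and $X\times X\times X$ as well, by the standard theory of weak mixing). Consider the algebraic representation $\Phi(x,y)=(\phi(x),\phi(y))$ of $X\times X$ into $(\mathbf{G}/\mathbf{H}\times\mathbf{G}/\mathbf{H})(k)$ for the diagonal $\mathbf{G}$-action; applying Proposition~\ref{prop:AG-ergodic} to it, $\Phi$ factors through a coset representation into some $\mathbf{G}/\mathbf{M}$, so up to null sets $\Phi$ takes values in a single $\mathbf{G}$-orbit $\mathcal{O}\subseteq(\mathbf{G}/\mathbf{H}\times\mathbf{G}/\mathbf{H})(k)$, which after translating one coordinate may be written $\mathcal{O}=\{(g\mathbf{H},gc\mathbf{H}):g\in\mathbf{G}\}$ for some $c$ (with $\mathbf{M}$ conjugate to $\mathbf{H}\cap c\mathbf{H}c^{-1}$). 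Because the two coordinate projections $X\times X\to X$ are measure-class preserving and, under the product measure, generate independent $\sigma$-algebras, one has $\Phi_*(\mu\otimes\mu)=\lambda\otimes\lambda$; hence $\lambda\otimes\lambda$ is carried by $\mathcal{O}$, i.e.\ for $\lambda$-a.e.\ $v$ and $\lambda$-a.e.\ $w$ the pair $(v,w)$ lies in $\mathcal{O}$ — equivalently, the relative position of $v$ and $w$ lies in the double coset $\mathbf{H}c\mathbf{H}$.

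I would then use symmetry: the flip $(x,y)\mapsto(y,x)$ is $T$-equivariant and preserves the measure class, so $\mathcal{O}$ is invariant under swapping coordinates, that is $\mathbf{H}c\mathbf{H}=\mathbf{H}c^{-1}\mathbf{H}$. Now invoke the ergodicity of $X\times X\times X$: for $\lambda$-a.e.\ triple $v,w,u$ all three pairwise relative positions lie in $\mathbf{H}c\mathbf{H}$, while the position of $(v,u)$ is a product of those of $(v,w)$ and $(w,u)$; combined with the symmetry and a Zariski-density argument for the supports, this forces $\mathbf{H}c\mathbf{H}$ to be closed under multiplication and inversion, hence a subgroup of $\mathbf{G}$; a subgroup contains $e$, so $c\in\mathbf{H}$ and $\mathcal{O}$ is the diagonal. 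Therefore $\phi(x)=\phi(y)$ for $\mu\otimes\mu$-a.e.\ $(x,y)$, which forces $\phi$ — and hence the original $\phi=\pi\circ\phi_{\mathbf{G}/\mathbf{H}}$ — to be essentially constant; say it is essentially equal to $v_0$. Finally, if $\tau(T)$ is Zariski dense, the equivariance relation $\phi(tx)=\tau(t)\phi(x)$ gives $\tau(t)v_0=v_0$ for all $t$, so the Zariski-closed subgroup $\Stab_{\mathbf{G}}(v_0)$ contains $\tau(T)$ and equals $\mathbf{G}$; thus $v_0$ is a $\mathbf{G}$-fixed point and the essential image $\{v_0\}$ is $\mathbf{G}$-invariant.

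The hard part will be the last reduction: turning the informal ``double-coset chase'' into a rigorous argument over an arbitrary complete valued field $k$. Over such $k$ the orbit $\mathcal{O}$ and the cosets need not have $k$-rational representatives, so $c$ lives only in $\mathbf{G}(\bar k)$, and $\mathbf{G}$-orbits on $\mathbf{G}/\mathbf{H}\times\mathbf{G}/\mathbf{H}$ — equivalently the double cosets $\mathbf{H}\backslash\mathbf{G}/\mathbf{H}$ — are only locally closed; one therefore has to work with the Zariski closures $\mathbf{Z}$ of $E$ and $\overline{\mathcal{O}}$ of $\mathcal{O}$, verify the inclusions $\mathbf{Z}\times\mathbf{Z}\subseteq\overline{\mathcal{O}}$ and its analogue for triples, and extract the subgroup property using that the relevant elements sweep out a Zariski-dense subset of $\mathbf{H}c\mathbf{H}$. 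An alternative route for this step is to phrase it through the relative weak mixing of the extension $q_1\colon X\times X\to X$ (whose fibered square over $X$ is $X\times X\times X$, still ergodic) and a relative triviality statement that descends the coset representation of $X\times X$ to its base.
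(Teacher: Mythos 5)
Your strategy is genuinely different from the paper's. The paper's entire proof is one sentence: push the (invariant, since weak mixing presupposes finite measure preserving) measure on $X$ forward to $\mu\in\Prob(\mathbf{V}(k))$ and apply \cite[Corollary~1.13]{BDL} with $L=\overline{\tau(T)}$; all the real work — the theorem that the stabilizer of a probability measure on $\mathbf{V}(k)$ is compact-by-algebraic, that the measure sits on the fixed points of the algebraic part, and the use of metric ergodicity to kill the residual compact part — lives in that reference. Within your proposal, the reduction to a coset representation via Proposition~\ref{prop:AG-ergodic}, the identity $\Phi_*(\mu\otimes\mu)=\lambda\otimes\lambda$, the conclusion that $\lambda\otimes\lambda$ is carried by a single orbit $\mathcal{O}$, the flip symmetry, and the closing Zariski-density argument for the ``Further'' clause are all fine.

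The gap is the double-coset chase, and it is not a technicality about rational points: the constraints you extract from $X^2$ and $X^3$ (indeed from all $X^n$) simply do not force $\mathcal{O}$ to be the diagonal. Test case: $\mathbf{G}=\PGL_2$, $\mathbf{H}$ a Borel, so $\mathbf{G}/\mathbf{H}=\mathbb{P}^1$ and the unique non-diagonal orbit is $\mathcal{O}=\{(v,w):v\neq w\}$, i.e.\ $c=w$ the Weyl element and $\mathbf{H}w\mathbf{H}$ the big Bruhat cell. Any non-atomic $\lambda$ on $\mathbb{P}^1(k)$ satisfies everything you derive: $\lambda\otimes\lambda(\mathcal{O})=1$, flip-invariance of $\mathcal{O}$, and for a.e.\ triple all pairwise positions lie in $\mathcal{O}$ (three generic points are pairwise distinct; $n$ points likewise, so higher products add nothing). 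Yet $\mathbf{H}w\mathbf{H}$ is not a subgroup ($w^2=e\notin\mathbf{H}w\mathbf{H}$). The reason your deduction breaks is that ergodicity of $X^3$ only puts the generic triple in a \emph{single} $\mathbf{G}$-orbit of $(\mathbf{G}/\mathbf{H})^3$, which yields merely $\mathbf{H}c\mathbf{H}c\mathbf{H}\cap\mathbf{H}c\mathbf{H}\neq\emptyset$ — one realizable configuration — not that every product of two elements of $\mathbf{H}c\mathbf{H}$ lands back in $\mathbf{H}c\mathbf{H}$; in the example this condition is vacuous since $\mathbf{H}w\mathbf{H}w\mathbf{H}=\mathbf{G}$. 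So the purely combinatorial statement your plan rests on (``$\lambda\otimes\lambda$ carried by one orbit $\Rightarrow$ $\lambda$ is Dirac'') is false, and the missing ingredient is precisely the $\overline{\tau(T)}$-invariance of $\lambda$: one must analyze the stabilizer of $\lambda$ in $\mathbf{G}(k)$ (unbounded stabilizers force atomicity by contraction/Furstenberg-type arguments; compact stabilizers are excluded by metric ergodicity of the weakly mixing action applied to the isometric action on the compact orbit). That dichotomy is exactly what \cite[Corollary~1.13]{BDL} packages, and it cannot be recovered from the orbit structure of $(\mathbf{G}/\mathbf{H})^n$ alone.
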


\begin{proof}
Letting $\mu\in\Prob(\mathbf{V}(k))$ be the push forward by $\phi$ of the measure on $X$ and $L$ be the closure of $\tau(T)$ in $\mathbf{G}(k)$,
it follows from \cite[Corollary 1.13]{BDL} that $\phi$ is essentially constant and its essential image is $L$-fixed.
\end{proof}

The following theorem guarantees non-triviality of the category of representations of $X$
(here, the $T$-action on $X$ is not assumed to preserve a finite measure).

\begin{theorem}[{\cite[Theorem 4.5]{BF-products}, \cite[Theorem 1.17]{BDL}}] \label{prop:AG-ME+Am}
Assume the $T$-Lebesgue space $X$ is both amenable and metrically ergodic.
Assume the $k$-algebraic group $\mathbf{G}$ is connected, $k$-simple and adjoint
and assume that $\tau(T)$ is Zariski dense and unbounded in $\mathbf{G}(k)$.
Then there exists a coset representation $\phi:X\to \mathbf{G}/\mathbf{H}(k)$ for some proper $k$-subgroup $\mathbf{H}\lneq \mathbf{G}$.
\end{theorem}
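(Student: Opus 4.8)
The plan is to use amenability to produce a single algebraic representation of $X$ into a flag variety of $\mathbf{G}$, then to bring it to coset form by Proposition~\ref{prop:AG-ergodic}, the properness of the resulting subgroup being automatic. First I would record that $\mathbf{G}$ is $k$-isotropic: since $\tau(T)$ is unbounded, $\mathbf{G}(k)$ is unbounded, and a $k$-anisotropic semi-simple group over a complete valued field has bounded group of $k$-points. Fix a minimal parabolic $k$-subgroup $\mathbf{P}\lneq\mathbf{G}$ and put $\mathbf{F}\defq\mathbf{G}/\mathbf{P}$, a non-trivial projective $k$-variety on which $\mathbf{G}$ acts transitively. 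The only geometric input I will need from $\mathbf{F}$ is this: a $\mathbf{G}$-equivariant $k$-morphism $\mathbf{G}/\mathbf{H}\to\mathbf{F}$ must carry the base point to some $g\mathbf{P}\in\mathbf{F}(k)=\mathbf{G}(k)/\mathbf{P}(k)$, and hence (a $\mathbf{G}$-morphism sends the stabiliser of a point into the stabiliser of its image) forces $\mathbf{H}\le g\mathbf{P}g^{-1}$; in particular $\mathbf{H}\lneq\mathbf{G}$.

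The substantive step is to produce the representation into $\mathbf{F}$. Because the $T$-action on $X$ is amenable, one first obtains a $T$-equivariant measurable map $\psi\colon X\to\Prob(\mathbf{F}(k))$ with $\psi(tx)=\tau(t)_*\psi(x)$; already here care is required, since $\mathbf{F}(k)$ need not be compact when $k$ is not locally compact, and this is carried out in \cite{BDL}. One then upgrades $\psi$ to a point-valued map by a contraction argument: since $\tau(T)$ is Zariski dense and unbounded in $\mathbf{G}(k)$, unbounded elements act on $\Prob(\mathbf{F}(k))$ by concentrating measures on proper subvarieties, and combined with the metric ergodicity of $X$ this forces $\psi$ to be Dirac-valued, say $\psi=\delta_\phi$ for a $T$-equivariant measurable $\phi\colon X\to\mathbf{F}(k)$. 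Thus $\phi$ is an algebraic representation of $X$ with underlying variety $\mathbf{F}=\mathbf{G}/\mathbf{P}$.

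Finally, metric ergodicity entails ergodicity of $X$, so Proposition~\ref{prop:AG-ergodic} applies to $\phi$ and yields a coset representation $\phi_{\mathbf{G}/\mathbf{H}}\colon X\to\mathbf{G}/\mathbf{H}(k)$ together with a $\mathbf{G}$-equivariant $k$-morphism $\mathbf{G}/\mathbf{H}\to\mathbf{F}$. By the geometric input from the first paragraph, such a morphism forces $\mathbf{H}\lneq\mathbf{G}$, so $\mathbf{H}$ is a proper $k$-subgroup and $\phi_{\mathbf{G}/\mathbf{H}}$ is the desired coset representation.

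I expect the main obstacle to be the passage from the amenability-produced measure-valued map $\psi$ to the point-valued map $\phi$: over a complete but possibly non-locally-compact $k$, neither $\mathbf{F}(k)$ nor $\mathbf{G}(k)$ is locally compact, so the classical construction of Furstenberg-type boundary maps and the usual proximality dynamics have to be re-established in this generality. This is exactly the work of \cite{BDL} (reported also in \cite{BF-products}); granting it, the remaining steps above are formal consequences of the framework of \S\ref{sec:AREA}.
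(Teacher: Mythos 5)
The paper contains no proof of Theorem~\ref{prop:AG-ME+Am}: it is imported wholesale from \cite[Theorem~4.5]{BF-products} and \cite[Theorem~1.17]{BDL}, so the comparison has to be with the argument in those references. Your outer layer is correct and matches the general framework of \S\ref{sec:AREA}: metric ergodicity implies ergodicity, Proposition~\ref{prop:AG-ergodic} turns any algebraic representation into a coset one, and a $\mathbf{G}$-equivariant $k$-morphism $\mathbf{G}/\mathbf{H}\to\mathbf{G}/\mathbf{P}$ does force $\mathbf{H}$ into a conjugate of $\mathbf{P}$, hence properness. The problem is that your middle paragraph is not a reduction of the theorem to \cite{BDL} but essentially a restatement of it, and the mechanism you attribute to \cite{BDL} is not the one used there and is not justified by your hypotheses.

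Concretely, two points. First, since $\mathbf{F}(k)$ is not compact for general complete $k$, amenability does not produce the map $\psi:X\to\Prob(\mathbf{F}(k))$; you flag this but then assume it, and the fix is not a re-run of the Furstenberg construction but the choice of a different compact convex target together with the theory of almost algebraic actions developed in \cite{BDL}. Second, and more seriously, the ``contraction argument'' forcing $\psi$ to be Dirac-valued is unsupported. Metric ergodicity is not double ergodicity or stationarity: there is no sequence of group elements contracting the measures $\psi(x)$, and no martingale/proximality dynamics available in this setting. What the cited argument actually does is use the fact that the $\mathbf{G}(k)$-action on $\Prob(\mathbf{V}(k))$ is \emph{almost algebraic} (orbits locally closed, stabilizers bounded extensions of $k$-points of algebraic subgroups): ergodicity then forces $\psi$ into a single orbit, Zariski density and unboundedness rule out a globally fixed measure, and the coset representation $X\to\mathbf{G}/\mathbf{H}(k)$ is extracted from the stabilizer of that orbit. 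No Diracness is ever proved, and the resulting $\mathbf{H}$ is just some proper $k$-subgroup, not one contained in a minimal parabolic. Note that your version, if correct, would yield a $T$-equivariant map from every amenable metrically ergodic space to the full flag variety $\mathbf{G}/\mathbf{P}(k)$ --- a strictly stronger statement than the theorem, which neither the hypotheses nor the cited results deliver. As it stands, the proposal has a genuine gap exactly at the step that constitutes the theorem.
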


\section{$T$-algebraic representations of $S$}

Throughout this section we fix a locally compact second countable group $S$ and a lattice $\Gamma<S$.
We endow $S$ with its Haar measure and regard it as a Lebesgue space.
We also fix a field $k$ endowed with a non-trivial absolute value which is separable and complete (as a metric space)
and a $k$-algebraic group $\mathbf{G}$.
We denote by $G$ the Polish group ${\bf{G}}(k)$.
Finally, we fix a homomorphism $\rho:\Gamma\to G$.

\begin{defn}
Given all the data above, for a closed subgroup $T<S$, a $T$-\emph{algebraic representation of} $S$
consists of the following data:
\begin{itemize}
	\item a $k$-algebraic group $\mathbf{L}$,
	\item a $k$-$(\mathbf{G}\times \mathbf{L})$-algebraic variety $\mathbf{V}$, regarded as a left $\bf G$, right $\bf L$ space,
	on which the $\mathbf{L}$-action is faithful,
	\item a homomorphism $\tau:T\to \mathbf{L}(k)$ with a Zariski dense image,
	\item an associated algebraic representation of the $\Gamma\times T$-space $S$ on $\mathbf{V}$,
	where $\Gamma$ acts on the left, $T$ acts on the right of the Lebesgue space $S$.
	That is, a Haar a.e defined measurable map $\phi:S \to \mathbf{V}(k)$ such that for almost every $s\in S$, every $\gamma\in \Gamma$ and every $t\in T$,
	\[ 
		\phi(\gamma s t)=\rho(\gamma) \phi(s)\tau(t).
	\]
\end{itemize}
We abbreviate the notation by saying that $\mathbf{V}$ is a $T$-algebraic representation of $S$,
denoting the extra data by $\mathbf{L}_\mathbf{V}, \tau_\mathbf{V}$ and $\phi_\mathbf{V}$.
Given another $T$-algebraic representation $\mathbf{U}$ we let $\mathbf{L}_{\mathbf{U},\mathbf{V}}<\mathbf{L}_\mathbf{U}\times \mathbf{L}_\mathbf{V}$ be the Zariski closure of the image 
of $\tau_\mathbf{U}\times \tau_\mathbf{V}:T\to \mathbf{L}_\mathbf{U}\times \mathbf{L}_\mathbf{V}$. 
Note that $\mathbf{L}_{\mathbf{U},\mathbf{V}}$ acts on $\mathbf{U}$ and $\mathbf{V}$ 
via its projections to $\mathbf{L}_\mathbf{U}$ and $\mathbf{L}_\mathbf{V}$ correspondingly.
A \emph{morphism} of $T$-algebraic representations of $S$ from the $T$-algebraic representation $\mathbf{U}$ to the $T$-algebraic representation $\mathbf{V}$ is
a $\mathbf{G}\times\mathbf{L}_{\mathbf{U},\mathbf{V}}$-equivariant $k$-morphism $\pi:\mathbf{U}\to \mathbf{V}$
such that $\phi_\mathbf{V}$ agrees a.e with $\pi\circ \phi_\mathbf{U}$.
\end{defn}

Fix a $k$-subgroup $\mathbf{H}<\mathbf{G}$ and denote $\mathbf{N}=N_\mathbf{G}(\mathbf{H})$.
This is again a $k$-subgroup.
Any element $n\in \mathbf{N}$ gives a $\mathbf{G}$-automorphism of $\mathbf{G}/\mathbf{H}$ by
$g\mathbf{H}\mapsto gn^{-1}\mathbf{H}$.
It is easy to see that the homomorphism $\mathbf{N} \to \Aut_\mathbf{G}(\mathbf{G}/\mathbf{H})$ thus obtained is surjective and its kernel is $\mathbf{H}$.
Under the obtained identification $\mathbf{N}/\mathbf{H} \cong \Aut_\mathbf{G}(\mathbf{G}/\mathbf{H})$,
the $k$-points of the $k$-group $\mathbf{N}/\mathbf{H}$ are identified with the $k$-$\mathbf{G}$-automorphisms of $\mathbf{G}/\mathbf{H}$.

\begin{defn}
	A $T$-algebraic representation of $S$ is said to be a \emph{coset $T$-algebraic representation} if it is isomorphic as a $T$-algebraic representation
	to $\mathbf{G}/\mathbf{H}$ for some $k$-algebraic subgroup $\mathbf{H}<\mathbf{G}$,
	and $\mathbf{L}$ corresponds to a $k$-subgroup of $N_\mathbf{G}(\mathbf{H})/\mathbf{H}$ which acts on $\mathbf{G}/\mathbf{H}$ as described above.
\end{defn}

It is clear that the collection of $T$-algebraic representations of $S$ and their morphisms form a category.

\begin{theorem} \label{thm:initial}
Assume the $T$-action on $S/\Gamma$ is weakly mixing.
Then the category of $T$-algebraic representations of $S$ has an initial object
and this initial object is a coset $T$-algebraic representation.
\end{theorem}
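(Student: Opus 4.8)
The plan is to realise the initial object as the ``smallest'' coset $T$-algebraic representation. Concretely, I would establish three facts: (a) every $T$-algebraic representation of $S$ receives a morphism from a coset $T$-algebraic representation; (b) there is at most one morphism out of a coset $T$-algebraic representation into any $T$-algebraic representation; (c) among coset $T$-algebraic representations there is one, call it $\mathbf{V}_0$, which admits a morphism to every coset $T$-algebraic representation. Granting these, (c) and (b) make $\mathbf{V}_0$ initial in the full subcategory of coset representations; given an arbitrary $\mathbf{V}$, (a) provides a coset representation $\mathbf{C}$ with a morphism to $\mathbf{V}$, and composing $\mathbf{V}_0\to\mathbf{C}\to\mathbf{V}$ gives a morphism $\mathbf{V}_0\to\mathbf{V}$, unique by (b); since $\mathbf{V}_0$ is itself a coset representation, this is exactly the assertion.

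For (a), the key point is that $S$, viewed as a Lebesgue space for $\Gamma\times T$ via $(\gamma,t)\cdot s=\gamma s t^{-1}$, is ergodic (this action is conjugate, via $s\mapsto s^{-1}$, to the right $T$-action on $\Gamma\backslash S$, which is ergodic because the left $T$-action on $S/\Gamma$ is weakly mixing). Given a $T$-algebraic representation $\mathbf{V}$, with structure group $\mathbf{L}$, homomorphism $\tau$ and map $\phi$, letting $\mathbf{G}\times\mathbf{L}$ act on $\mathbf{V}$ by $(g,\ell)\cdot v=gv\ell^{-1}$ turns $\phi$ into an algebraic representation, in the sense of \S\ref{sec:AREA}, of the ergodic $(\Gamma\times T)$-space $S$ with respect to $\rho\times\tau$. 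Proposition~\ref{prop:AG-ergodic} then yields a coset representation $\psi\colon S\to((\mathbf{G}\times\mathbf{L})/\mathbf{H}')(k)$ together with a $(\mathbf{G}\times\mathbf{L})$-morphism $(\mathbf{G}\times\mathbf{L})/\mathbf{H}'\to\mathbf{V}$. Composing $\psi$ with the quotient $(\mathbf{G}\times\mathbf{L})/\mathbf{H}'\to\mathbf{G}\backslash(\mathbf{G}\times\mathbf{L})/\mathbf{H}'$ gives, after descent to $\Gamma\backslash S$, an algebraic representation of the weakly mixing $T$-space $\Gamma\backslash S$ into the homogeneous $\mathbf{L}$-space $\mathbf{L}/\mathbf{M}$, where $\mathbf{M}$ is the image of $\mathbf{H}'$ in $\mathbf{L}$. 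By Proposition~\ref{prop:AG-WM} it is essentially constant, and since $\tau(T)$ is Zariski dense in $\mathbf{L}$ its value is fixed by $\mathbf{L}$; as $\mathbf{L}$ acts transitively on $\mathbf{L}/\mathbf{M}$, this forces $\mathbf{M}=\mathbf{L}$, i.e.\ $\mathbf{H}'$ surjects onto $\mathbf{L}$. A Goursat-type computation then identifies $(\mathbf{G}\times\mathbf{L})/\mathbf{H}'$, as a left $\mathbf{G}$-variety with its residual $\mathbf{L}$-action, with $\mathbf{G}/\mathbf{H}_0$ where $\mathbf{H}_0=\{g\in\mathbf{G}:(g,1)\in\mathbf{H}'\}$, the $\mathbf{L}$-action factoring through a faithful action of a quotient $\mathbf{L}_0$ of $\mathbf{L}$ (namely $\mathbf{L}$ modulo $\mathbf{H}'\cap(1\times\mathbf{L})$) by $\mathbf{G}$-automorphisms, that is, through an embedding $\mathbf{L}_0\hookrightarrow N_\mathbf{G}(\mathbf{H}_0)/\mathbf{H}_0$; so $\mathbf{G}/\mathbf{H}_0$, with $\mathbf{L}_0$ and $\tau$ composed with $\mathbf{L}\to\mathbf{L}_0$, is a coset $T$-algebraic representation of $S$, and the morphism above becomes a morphism of $T$-algebraic representations onto $\mathbf{V}$.

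For (b): if $\mathbf{U}$ is a coset representation, then $\phi_\mathbf{U}(\gamma s t)=\rho(\gamma)\phi_\mathbf{U}(s)\tau_\mathbf{U}(t)$ shows the Zariski closure of the essential image of $\phi_\mathbf{U}$ is invariant under left translation by $\rho(\Gamma)$; since $\rho(\Gamma)$ is Zariski dense in $\mathbf{G}$, which acts transitively on $\mathbf{U}$, that image is Zariski dense in $\mathbf{U}$, so two morphisms out of $\mathbf{U}$ agreeing on it agree everywhere (targets being separated). For (c), order coset $T$-algebraic representations by $\mathbf{A}\preceq\mathbf{B}$ when a morphism $\mathbf{A}\to\mathbf{B}$ exists — a partial order on isomorphism classes by (b). A morphism $\mathbf{G}/\mathbf{H}_1\to\mathbf{G}/\mathbf{H}_2$ forces a conjugate of $\mathbf{H}_1$ into $\mathbf{H}_2$, so a descending chain of coset representations produces a descending chain of algebraic subgroups of $\mathbf{G}$ up to conjugacy, which stabilises (dimensions stabilise, then identity components, then the finite component groups); hence this poset satisfies the descending chain condition and, being non-empty ($\mathbf{G}/\mathbf{G}$ with trivial $\mathbf{L}$ is a coset representation), has a minimal element. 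It is also down-directed: given $\mathbf{A},\mathbf{B}$, the product $\mathbf{A}\times\mathbf{B}$ with the diagonal $\mathbf{G}$-action, structure group the Zariski closure of the image of $\tau_\mathbf{A}\times\tau_\mathbf{B}$, and the evident map is a $T$-algebraic representation of $S$ (the diagonal structure-group action is faithful because the factor actions are), so by (a) it is dominated by a coset representation $\mathbf{C}$, and composing with the projections gives $\mathbf{C}\preceq\mathbf{A}$ and $\mathbf{C}\preceq\mathbf{B}$. A down-directed poset with a minimal element has it as its minimum, and this is $\mathbf{V}_0$.

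The main obstacle is (a), and within it the step immediately after applying Proposition~\ref{prop:AG-ergodic}: that proposition is applied over the enlarged group $\mathbf{G}\times\mathbf{L}$ and produces a coset variety for $\mathbf{G}\times\mathbf{L}$, not for $\mathbf{G}$, and one must cut it back down to the required shape $\mathbf{G}/\mathbf{H}$ with $\mathbf{L}<N_\mathbf{G}(\mathbf{H})/\mathbf{H}$. This is exactly where the weak mixing hypothesis is used — through Proposition~\ref{prop:AG-WM}, to collapse the quotient $\mathbf{L}/\mathbf{M}$ — and it demands careful bookkeeping of the group $\mathbf{L}$ and of the compatibility of the resulting morphism with the $\mathbf{L}_{\mathbf{U},\mathbf{V}}$-actions (the delicacy acknowledged in the introduction). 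The remaining ingredients are comparatively soft: (b) is Zariski density plus separatedness, and (c) is the descending chain condition for algebraic subgroups of $\mathbf{G}$ together with down-directedness.
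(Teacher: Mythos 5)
Your proof is correct and follows essentially the same route as the paper's: your step (a) is the paper's Lemma~\ref{lem:WM contraction} (apply Proposition~\ref{prop:AG-ergodic} to the ergodic $\Gamma\times T$-space $S$, then collapse the $\mathbf{L}$-factor of the resulting coset variety via Proposition~\ref{prop:AG-WM} on the descent to $S/\Gamma$), and your minimal element plus product-representation argument in (c) is the paper's Noetherian minimality combined with the same product trick. The one caveat is your justification of (b): you invoke Zariski density of $\rho(\Gamma)$ in $\mathbf{G}$, which is a hypothesis of Theorem~\ref{thm:main} but not of Theorem~\ref{thm:initial}; this is harmless, since the required uniqueness follows directly from the fact that two $\mathbf{G}$-equivariant maps out of the homogeneous variety $\mathbf{G}/\mathbf{H}$ agree either nowhere or everywhere, and any two morphisms of $T$-algebraic representations must agree on the nonempty essential image of $\phi_{\mathbf{G}/\mathbf{H}}$ --- which is exactly the paper's argument.
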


We will first prove the following lemma.

\begin{lemma} \label{lem:WM contraction}
	Assume the $T$-action on $S/\Gamma$ is weakly mixing.
	Let $\mathbf{V}$ be a $T$-algebraic representation of $S$.
	Then there exists a coset $T$-algebraic representation of $S$ for some $k$-algebraic subgroup $\mathbf{H}<\mathbf{G}$
	and a morphism of $T$-algebraic representations $\pi:\mathbf{G}/\mathbf{H}\to \mathbf{V}$.
\end{lemma}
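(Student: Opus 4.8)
The plan is to combine the ordinary theory of algebraic representations of ergodic actions (Propositions~\ref{prop:AG-ergodic} and~\ref{prop:AG-WM}) applied on the $S/\Gamma$ side with a Mackey-type induction that converts the bimodule structure on $S$ into the $T$-algebraic representation packaging. First I would forget the $\Gamma$-action momentarily and view $\phi_\mathbf{V}:S\to\mathbf{V}(k)$ as an algebraic representation of the $T$-space $S$, where $T$ acts on the right. Since $\Gamma$ acts on the left with quotient $S/\Gamma$ of finite measure and $T$ acts on $S/\Gamma$ weakly mixingly, the right $T$-action on $S$ need not itself be ergodic; the right thing is to descend. Concretely, fix a fundamental domain and identify $S\cong (S/\Gamma)\times\Gamma$ as a measure space; the cocycle $\alpha:T\times S/\Gamma\to\Gamma$ of the $T$-action on $S/\Gamma$ together with $\rho$ gives a measurable cocycle $S/\Gamma\times T\to\mathbf{G}(k)\times\mathbf{L}_\mathbf{V}(k)$, and $\phi_\mathbf{V}$ becomes an $\alpha$-equivariant map $S/\Gamma\to\mathbf{V}(k)$. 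Equivalently, and more cleanly, I would push the whole situation into the framework of \S\ref{sec:AREA} with the group "$T$" there taken to be $T$, the target group taken to be $\mathbf{G}\times\mathbf{L}_\mathbf{V}$, the homomorphism $\tau$ there built from $\rho\circ\alpha$ and $\tau_\mathbf{V}$, and the Lebesgue $T$-space taken to be $S/\Gamma$ (which is weakly mixing, hence in particular ergodic).

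Next I would apply Proposition~\ref{prop:AG-ergodic} to the algebraic representation of the ergodic $T$-space $S/\Gamma$ on the $(\mathbf{G}\times\mathbf{L}_\mathbf{V})$-variety $\mathbf{V}$ just constructed: there is a coset representation $\phi_{(\mathbf{G}\times\mathbf{L}_\mathbf{V})/\mathbf{M}}:S/\Gamma\to(\mathbf{G}\times\mathbf{L}_\mathbf{V})/\mathbf{M}(k)$ for some $k$-subgroup $\mathbf{M}<\mathbf{G}\times\mathbf{L}_\mathbf{V}$ together with an equivariant $k$-morphism $(\mathbf{G}\times\mathbf{L}_\mathbf{V})/\mathbf{M}\to\mathbf{V}$ over which $\phi_\mathbf{V}$ factors a.e. The point of passing to a coset variety is that now the second-coordinate action is under control: projecting $(\mathbf{G}\times\mathbf{L}_\mathbf{V})/\mathbf{M}$ to its "$\mathbf{L}_\mathbf{V}$-part" gives an algebraic representation of $S/\Gamma$ on a variety on which $\mathbf{L}_\mathbf{V}$ (equivalently $\tau_\mathbf{V}(T)$, which is Zariski dense in $\mathbf{L}_\mathbf{V}$) acts; since the $T$-action on $S/\Gamma$ is weakly mixing, Proposition~\ref{prop:AG-WM} forces that part to be essentially constant with $\mathbf{L}_\mathbf{V}$-invariant image. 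Because the $\mathbf{L}_\mathbf{V}$-action on $\mathbf{V}$ is faithful and $\mathbf{L}_\mathbf{V}$ is the Zariski closure of $\tau_\mathbf{V}(T)$, this rigidity is exactly what pins $\mathbf{M}$ down to be the graph of a homomorphism relating a subgroup of $\mathbf{G}$ to $\mathbf{L}_\mathbf{V}$; unwinding, $\mathbf{M}$ projects isomorphically onto a $k$-subgroup of $\mathbf{L}_\mathbf{V}$ and $(\mathbf{G}\times\mathbf{L}_\mathbf{V})/\mathbf{M}\cong\mathbf{G}/\mathbf{H}$ for a suitable $\mathbf{H}<\mathbf{G}$, with $\mathbf{L}_\mathbf{V}$ identified with a $k$-subgroup of $N_\mathbf{G}(\mathbf{H})/\mathbf{H}$ acting on $\mathbf{G}/\mathbf{H}$ in the standard way.

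Finally I would translate back: the coset representation $S/\Gamma\to\mathbf{G}/\mathbf{H}(k)$ and the dictionary between cocycle-equivariant maps on $S/\Gamma$ and $(\Gamma\times T)$-equivariant maps on $S$ yields a map $\phi_{\mathbf{G}/\mathbf{H}}:S\to\mathbf{G}/\mathbf{H}(k)$ satisfying $\phi_{\mathbf{G}/\mathbf{H}}(\gamma s t)=\rho(\gamma)\phi_{\mathbf{G}/\mathbf{H}}(s)\tau_\mathbf{V}(t)$, i.e.\ a coset $T$-algebraic representation of $S$, together with the $\mathbf{G}\times\mathbf{L}_\mathbf{V}$-equivariant $k$-morphism $\pi:\mathbf{G}/\mathbf{H}\to\mathbf{V}$ through which $\phi_\mathbf{V}$ factors; one checks $\mathbf{L}_{\mathbf{G}/\mathbf{H},\mathbf{V}}$ surjects onto $\mathbf{L}_\mathbf{V}$ so that $\pi$ is indeed a morphism of $T$-algebraic representations in the required sense. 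The main obstacle I anticipate is the bookkeeping in the very first step: making precise, with full measurability and a.e.\ care, the passage from the bi-equivariant map on $S$ to the cocycle-equivariant map on $S/\Gamma$ and back — in particular verifying that weak mixing of the $T$-action on $S/\Gamma$ is the correct hypothesis to feed into Propositions~\ref{prop:AG-ergodic} and~\ref{prop:AG-WM}, and that the Zariski density of $\tau_\mathbf{V}(T)$ in $\mathbf{L}_\mathbf{V}$ survives the descent. The algebraic extraction of $\mathbf{H}$ from $\mathbf{M}$ is then essentially formal given faithfulness of the $\mathbf{L}_\mathbf{V}$-action, so I expect it to be routine.
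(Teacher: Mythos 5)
Your second and third steps reproduce the paper's core argument: pass to a coset variety $(\mathbf{G}\times\mathbf{L}_\mathbf{V})/\mathbf{M}$, project to its $\mathbf{L}_\mathbf{V}$-part, use weak mixing of the $T$-action on $S/\Gamma$ together with Proposition~\ref{prop:AG-WM} to force $\pr_2(\mathbf{M})=\mathbf{L}_\mathbf{V}$, and conclude that the coset variety is a $\mathbf{G}$-coset space $\mathbf{G}/\mathbf{H}$. The gap is in your first step. Proposition~\ref{prop:AG-ergodic} is stated for an algebraic representation of an ergodic Lebesgue space with respect to a fixed continuous \emph{homomorphism} into the $k$-points of the acting algebraic group; your descent to $S/\Gamma$ replaces the left $\Gamma$-equivariance by the cocycle $\rho\circ\alpha:T\times S/\Gamma\to\mathbf{G}(k)$, and a cocycle-equivariant map is not an algebraic representation in the sense of \S\ref{sec:AREA}. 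So feeding ``$\tau$ built from $\rho\circ\alpha$ and $\tau_\mathbf{V}$'' into Proposition~\ref{prop:AG-ergodic} is not a legitimate application of that proposition as quoted; you would need a cocycle version of it, which this paper does not supply (it exists in the authors' longer manuscript, but invoking it here would be circular with respect to the toolkit you are allowed).

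The paper's resolution is simpler and avoids cocycles entirely: since the $T$-action on $S/\Gamma$ is ergodic, the $\Gamma\times T$-action on $S$ (left times right) is ergodic, and $\phi_\mathbf{V}$ is an algebraic representation of the ergodic $(\Gamma\times T)$-space $S$ for the genuine homomorphism $\rho\times\tau_\mathbf{V}:\Gamma\times T\to(\mathbf{G}\times\mathbf{L}_\mathbf{V})(k)$. Proposition~\ref{prop:AG-ergodic} then applies verbatim and produces the coset representation $(\mathbf{G}\times\mathbf{L}_\mathbf{V})/\mathbf{M}$ on $S$ itself; only afterwards, for the composition with the $\mathbf{G}$-invariant projection to $\mathbf{L}_\mathbf{V}/\pr_2(\mathbf{M})$, does one descend to $S/\Gamma$ --- and there the equivariance is governed by the honest homomorphism $\tau_\mathbf{V}$, so Proposition~\ref{prop:AG-WM} applies directly. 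If you replace your induction/descent step by this observation, the remainder of your argument matches the paper's. One further imprecision: $\mathbf{M}$ does not project isomorphically onto a subgroup of $\mathbf{L}_\mathbf{V}$, nor need it be a graph; the relevant conclusion is only that $\pr_2(\mathbf{M})=\mathbf{L}_\mathbf{V}$, whence $\mathbf{G}$ acts transitively on $(\mathbf{G}\times\mathbf{L}_\mathbf{V})/\mathbf{M}$ and $\mathbf{H}$ is read off from $\mathbf{M}$.
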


\begin{proof}
	The $T$-action on $S/\Gamma$ is weakly mixing, and in particular ergodic, thus the $\Gamma\times T$ -action on $S$ is ergodic.
	Applying Proposition~\ref{prop:AG-ergodic} we get that there exists a coset representation $(\mathbf{G}\times \mathbf{L})/{\bf M}$ for some $k$-algebraic subgroup
	${\bf M}<\mathbf{G}\times \mathbf{L}$ and a morphism of algebraic representations $\pi:(\mathbf{G}\times \mathbf{L})/{\bf M}\to \mathbf{V}$.
	We are thus reduced to the case
	$\mathbf{V}=(\mathbf{G}\times \mathbf{L})/{\bf M}$.
	Denote the obvious projection from $\mathbf{G} \times \mathbf{L}$
	to $\mathbf{G}$ and $\mathbf{L}$ correspondingly by $\pr_1$ and $\pr_2$.
	The composition of the map $\phi:S \to (\mathbf{G}\times \mathbf{L})/\mathbf{M}(k)$ with the $\mathbf{G}(k)$-invariant map 
	\[
		(\mathbf{G}\times \mathbf{L})/\mathbf{M}(k)\to \mathbf{L}/\pr_2(\mathbf{M})(k)
	\]
	clearly factors through $S/\Gamma$, and thus gives a coset representation of the $T$-Lebesgue space $X=S/\Gamma$ on $\mathbf{L}/\pr_2(\mathbf{M})$.
	Applying Proposition~\ref{prop:AG-WM}, we conclude that $\mathbf{L}/\pr_2(\mathbf{M})$ contains a $\mathbf{G}$-invariant point, thus $\pr_2(\mathbf{M})=\mathbf{L}$.
	It follows that as $\mathbf{G}$-varieties, $(\mathbf{G}\times \mathbf{L})/\mathbf{M} \cong \mathbf{G}/\pr_1({\bf M})$
	and letting $\mathbf{H}=\pr_1(\mathbf{M})<\mathbf{G}$, the lemma follows.
\end{proof}

\begin{proof}[Proof of Theorem~\ref{thm:initial}]
We consider the collection
\[ 
	\{\mathbf{H}<\mathbf{G}~|~\mathbf{H}\mbox{ is defined over } k \mbox{ and there exists a coset $T$-representation to } \mathbf{G}/\mathbf{H} \}. 
\]
This is a non-empty collection as it contains $\mathbf{G}$.
By the Neotherian property, this collection contains a minimal element.
We choose such a minimal element $\mathbf{H}_0$
and fix corresponding algebraic $k$-subgroup $\mathbf{L}_0<N_\mathbf{G}(\mathbf{H}_0)/\mathbf{H}_0$,
homomorphism $\tau_0:T \to \mathbf{L}_0(k)$
and a representation $\phi_0:S \to (\mathbf{G}/\mathbf{H}_0)(k)$.
We argue to show that this coset $T$-representation is the required initial object.

Fix any $T$-algebraic representation of $S$, $\mathbf{V}$.
It is clear that, if exists, a morphism of $T$-algebraic representations from $\mathbf{G}/\mathbf{H}_0$ to $\mathbf{V}$ is unique, as two different $\mathbf{G}$-maps
$\mathbf{G}/\mathbf{H}_0\to \mathbf{V}$ agree nowhere.
We are left to show existence.
To this end we consider
the product $T$-algebraic representation $\mathbf{V}\times \mathbf{G}/\mathbf{H}_0$ given by the data $\phi=\phi_\mathbf{V}\times \phi_0$,
$\tau=\tau_\mathbf{V}\times \tau_0$ and $\mathbf{L}$ being he Zariski closure of $\tau(T)$ in $\mathbf{L}_\mathbf{V}\times \mathbf{L}_0$.
Applying Lemma~\ref{lem:WM contraction} to this product $T$-algebraic representation we obtain the following commutative diagram
\[
	\xymatrix{ 
		S \ar@{.>}[r] \ar[d]^{\phi_\mathbf{V}} \ar[rd]^{\phi} \ar@/^3pc/[rrd]^{\phi_0} & \mathbf{G}/\mathbf{H}(k) \ar@{.>}[d]_{\pi} &  \\
		   \mathbf{V}(k) & \mathbf{V}\times \mathbf{G}/\mathbf{H}_0(k) \ar[r]^{~~\pr_2} \ar[l]_{\pr_1~~~~} & \mathbf{G}/\mathbf{H}_0(k)  
	}
\]
By the minimality of $\mathbf{H}_0$, the $\mathbf{G}$-morphism $\pr_2\circ \pi:\mathbf{G}/\mathbf{H} \to \mathbf{G}/\mathbf{H}_0$ must be a $k$-isomorphism,
hence an isomorphism of $T$-algebraic representations.
We thus obtain the morphism of $T$-algebraic representations
\[ 
	\pr_1\circ \pi \circ (\pr_2\circ \pi)^{-1}:\mathbf{G}/\mathbf{H}_0(k) \to \mathbf{V}(k). 
\]
This completes the proof of Theorem~\ref{thm:initial}.
\end{proof}

\begin{remark} \label{rem:initial}
	Let the data $\mathbf{G}/\mathbf{H}$, $\tau:T \to \mathbf{L}(k)<N_\mathbf{G}(\mathbf{H})/\mathbf{H}(k)$ 
	and $\phi:S \to (\mathbf{G}/\mathbf{H})(k)$ form an initial object
	in the category of $T$-algebraic representations of $S$.
	For $g\in \mathbf{G}(k)$ we get a $\mathbf{G}$-equivariant $k$-isomorphism $\pi_g:\mathbf{G}/\mathbf{H}\to \mathbf{G}/\mathbf{H}^g$ given by 
	$x\mathbf{H}\mapsto xg^{-1}\mathbf{H}^g$.
	Denoting by $\inn(g):N_\mathbf{G}(\mathbf{H})/\mathbf{H}\to N_\mathbf{G}(\mathbf{H}^g)/\mathbf{H}^g$ the $k$-isomorphism $n\mathbf{H}\mapsto n^g\mathbf{H}^g$
	and by $\mathbf{L}^g$ the image of $\mathbf{L}$ under $\inn(g)$
	we get that the data
	\[
		\mathbf{G}/\mathbf{H}^g,\quad 
		\inn(g)\circ \tau:T \to \mathbf{L}^g(k)<N_\mathbf{G}(\mathbf{H}^g)/\mathbf{H}^g(k),\quad 
		\pi_g\circ \phi:S \to \mathbf{G}/\mathbf{H}(k)
	\]
	forms another $T$-algebraic coset representations of $S$, isomorphic to the one given above,
	thus again an initial object
	in the category of $T$-algebraic representations of $S$.
	Furthermore, it is easy to verify that any actual coset presentation of the initial object in the category of $T$-algebraic representations of $S$
	is of the above form, for some $g\in \mathbf{G}(k)$.
\end{remark}

It turns out that an initial object in the category of $T$-algebraic representations of $S$ extends naturally to an $N$-algebraic representation of $S$, where $N$ denotes the normalizer of $T$ in $S$.

\begin{theorem} \label{thm:normalizer}
	Assume the action of $T$ on $S/\Gamma$ is weakly mixing and let $\mathbf{G}/\mathbf{H}$,
	$\tau:T \to \mathbf{L}(k)<N_\mathbf{G}(\mathbf{H})/\mathbf{H}(k)$ and $\phi:S \to \mathbf{G}/\mathbf{H}(k)$ be an initial object
	in the category of $T$-algebraic representations of $S$, as guaranteed by Theorem~\ref{thm:initial}.
	Then the map $\tau:T \to N_\mathbf{G}(\mathbf{H})/\mathbf{H}(k)$ extends to the normalizer $N=N_S(T)$ of $T$ in $S$,
	and the map $\phi$ could be seen as an $N$-algebraic representations of $S$.
	More precisely, there exists a continuous homomorphism $\bar{\tau}:N\to N_\mathbf{G}(\mathbf{H})/\mathbf{H}(k)$ satisfying $\bar{\tau}|_T=\tau$
	such that, denoting by $\bar{\mathbf{L}}$ the Zariski closure of $\bar{\tau}(N)$ in $N_\mathbf{G}(\mathbf{H})/\mathbf{H}$, the data
	\[
		\mathbf{G}/\mathbf{H},\quad
		\bar{\tau}:N \to \bar{\mathbf{L}}(k)<N_\mathbf{G}(\mathbf{H})/\mathbf{H}(k),\quad
		\phi:S \to \mathbf{G}/\mathbf{H}(k)
	\] 
	forms an $N$-algebraic coset representation.
	Moreover, this $N$-algebraic coset representation is an initial object in the category of $N$-algebraic representations.
\end{theorem}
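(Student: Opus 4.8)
The plan is to construct $\bar{\tau}$ by a twisting argument and then deduce the rest formally. The key observation is that for each $n\in N=N_S(T)$, precomposition with the right translation $s\mapsto sn$ is an \emph{isomorphism} of the category of $T$-algebraic representations of $S$ onto itself. Explicitly, to a $T$-algebraic representation $(\mathbf{V},\mathbf{L}_\mathbf{V},\tau_\mathbf{V},\phi_\mathbf{V})$ I would associate $F_n(\mathbf{V})$ with the same $\mathbf{V}$ and $\mathbf{L}_\mathbf{V}$, with $\phi_\mathbf{V}^{(n)}(s):=\phi_\mathbf{V}(sn)$ and $\tau_\mathbf{V}^{(n)}(t):=\tau_\mathbf{V}(n^{-1}tn)$. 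Since $n$ normalizes $T$ and right translation by $n$ preserves the Haar measure class, $\tau_\mathbf{V}^{(n)}$ is again a continuous homomorphism whose image $\tau_\mathbf{V}(T)$ is Zariski dense in $\mathbf{L}_\mathbf{V}$, and one checks directly that $\phi_\mathbf{V}^{(n)}(\gamma s t)=\rho(\gamma)\,\phi_\mathbf{V}^{(n)}(s)\,\tau_\mathbf{V}^{(n)}(t)$. Because conjugation by $n$ is an automorphism of $T$, the group $\mathbf{L}_{\mathbf{U},\mathbf{V}}$ attached to two representations is unchanged, so a morphism remains a morphism; and $F_n\circ F_{n^{-1}}=\id$. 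Hence $F_n$ carries initial objects to initial objects.

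Applying $F_n$ to the given initial coset object $(\mathbf{G}/\mathbf{H},\mathbf{L},\tau,\phi)$ yields an initial object which is again a coset representation \emph{with the same underlying variety} $\mathbf{G}/\mathbf{H}$. By the last assertion of Remark~\ref{rem:initial} this object is the conjugate presentation attached to some $g=g(n)\in\mathbf{G}(k)$, and matching underlying varieties forces $g(n)\in N_\mathbf{G}(\mathbf{H})(k)$; equivalently, there is a unique $\bar{\tau}(n)\in(N_\mathbf{G}(\mathbf{H})/\mathbf{H})(k)$ with $\phi(sn)=\phi(s)\,\bar{\tau}(n)$ for a.e.\ $s\in S$. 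Uniqueness holds because, the homomorphism $N_\mathbf{G}(\mathbf{H})/\mathbf{H}\to\Aut_\mathbf{G}(\mathbf{G}/\mathbf{H})$ being an isomorphism, a $\mathbf{G}$-automorphism of the transitive $\mathbf{G}$-variety $\mathbf{G}/\mathbf{H}$ is determined by its value at a single point, and the essential image of $\phi$ is non-empty. Plugging $n=t\in T$ into the defining relation and comparing with the original $\Gamma\times T$-equivariance gives $\bar{\tau}|_T=\tau$; writing $\phi(s n_1 n_2)=\phi((sn_1)n_2)$ in two ways and using the same uniqueness gives $\bar{\tau}(n_1 n_2)=\bar{\tau}(n_1)\bar{\tau}(n_2)$, so $\bar{\tau}$ is a homomorphism. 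For continuity I would observe that the graph $\{(y,y',m)\in(\mathbf{G}/\mathbf{H})(k)^2\times(N_\mathbf{G}(\mathbf{H})/\mathbf{H})(k):y'=ym\}$ is Zariski closed and, by the triviality of point stabilizers, projects injectively to $(\mathbf{G}/\mathbf{H})(k)^2$; it is therefore the graph of a Borel partial map, and precomposing with the Borel map $(s,n)\mapsto(\phi(s),\phi(sn))$ and using Fubini exhibits $\bar{\tau}$ as a Borel homomorphism between Polish groups, hence continuous.

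With $\bar{\mathbf{L}}$ the Zariski closure of $\bar{\tau}(N)$ in $N_\mathbf{G}(\mathbf{H})/\mathbf{H}$, the data $(\mathbf{G}/\mathbf{H},\bar{\mathbf{L}},\bar{\tau},\phi)$ is an $N$-algebraic coset representation: the equivariance $\phi(\gamma s n)=\rho(\gamma)\,\phi(s)\,\bar{\tau}(n)$ follows by combining the original $\Gamma$-equivariance with the defining relation for $\bar{\tau}$, and the remaining requirements ($\bar{\mathbf{L}}\le N_\mathbf{G}(\mathbf{H})/\mathbf{H}$ acting faithfully, Zariski density of $\bar{\tau}(N)$) are automatic. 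For initiality in the category of $N$-algebraic representations, let $\mathbf{V}$ be such a representation; restricting its structure to $T$ — replacing $\mathbf{L}_\mathbf{V}$ by the Zariski closure of $\tau_\mathbf{V}(T)$ — produces a $T$-algebraic representation, so Theorem~\ref{thm:initial} supplies a unique morphism $\pi:\mathbf{G}/\mathbf{H}\to\mathbf{V}$ of $T$-representations. The point is that $\pi$ is automatically a morphism of $N$-representations: for every $n\in N$ one has a.e.
\[
	\pi\bigl(\phi(s)\,\bar{\tau}(n)\bigr)=\pi\bigl(\phi(sn)\bigr)=\phi_\mathbf{V}(sn)=\phi_\mathbf{V}(s)\,\tau_\mathbf{V}(n)=\pi\bigl(\phi(s)\bigr)\,\tau_\mathbf{V}(n),
\]
so the $\mathbf{G}$-morphisms $x\mapsto\pi(x\,\bar{\tau}(n))$ and $x\mapsto\pi(x)\,\tau_\mathbf{V}(n)$ agree on the non-empty essential image of $\phi$, hence everywhere since $\mathbf{G}/\mathbf{H}$ is a single $\mathbf{G}$-orbit; as equivariance is a Zariski-closed condition and $\bar{\tau}(N)\times\tau_\mathbf{V}(N)$ is Zariski dense in $\mathbf{L}_{\mathbf{G}/\mathbf{H},\mathbf{V}}$, this makes $\pi$ equivariant for the whole $N$-diagonal group. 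Uniqueness of $\pi$ as an $N$-morphism is immediate, since it is already the unique $\mathbf{G}$-map compatible with $\phi$ and $\phi_\mathbf{V}$.

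The only genuinely technical point is the continuity of $\bar{\tau}$; everything else is a formal consequence of Theorem~\ref{thm:initial}, Remark~\ref{rem:initial}, and the rigidity of $\mathbf{G}$-morphisms between homogeneous spaces. Even the continuity reduces to two standard facts — that a map with Borel graph between standard Borel spaces is Borel, and that a Borel homomorphism between Polish groups is continuous — so I do not anticipate a serious obstacle.
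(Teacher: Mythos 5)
Your proof is correct and follows essentially the same route as the paper's: the $n$-twisted data (right-translated $\phi$ together with $\tau$ precomposed with conjugation by $n$) is again a $T$-algebraic representation, and initiality of $\mathbf{G}/\mathbf{H}$ produces the unique $\mathbf{G}$-automorphism $\bar\tau(n)$, with the homomorphism property and $\bar\tau|_T=\tau$ both extracted from the uniqueness of morphisms out of the initial object. The only differences are that you supply in full two items the paper omits --- the continuity of $\bar\tau$ (deferred there to the proof of \cite{BF-products}*{Theorem~4.7}) and the initiality of the resulting object in the category of $N$-algebraic representations (explicitly left to the reader) --- and both of your arguments for these are sound.
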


\begin{proof}
Fix $n\in N$.
Set $\tau'=\tau\circ \inn(n):T\to \mathbf{L}(k)$, where $\inn(n):T\to T$ denotes the inner automorphism $t\mapsto t^n=ntn^{-1}$,
and $\phi'=\phi\circ R_n:S\to \mathbf{G}/\mathbf{H}(k)$, where $R_n:S\to S$ denotes the right regular action $s\mapsto sn^{-1}$.
We claim that the data $\mathbf{L}$, $\mathbf{G}/\mathbf{H}$, $\tau'$ and $\phi'$ forms a new $T$-representation of $S$.
Indeed, for almost every $s\in S$, every $\gamma\in \Gamma$ and every $t\in T$,
\[ 
	\phi'(\gamma s t)=\phi(\gamma s t n^{-1})=\phi(\gamma s n^{-1} t^n)= \rho(\gamma) \phi' (s)\tau'(t). 
\]
Since the $T$-algebraic representation of $S$ given by $\mathbf{L}$, $\mathbf{G}/\mathbf{H}$,
$\tau$ and $\phi$, forms an initial object, we get the dashed vertical arrow, which we denote $\bar{\tau}(n)$, in the following diagram:
\[
	\xymatrix{ S \ar[r]^{\phi} \ar[rd]_{\phi'} & \mathbf{G}/\mathbf{H}(k) \ar@{.>}[d]^{\bar{\tau}(n)}  \\
		    & \mathbf{G}/\mathbf{H}(k)  }
\]
It follows from the uniqueness of the dashed arrow, that the map $n\mapsto \bar{\tau}(n)$ is a homomorphism from $N$ to the
the group of $k$-$\mathbf{G}$-automorphism of $\mathbf{G}/\mathbf{H}$,
which we identify with $N_\mathbf{G}(\mathbf{H})/\mathbf{H}(k)$.
For $n\in T$ the map $\tau(n):\mathbf{G}/\mathbf{H}\to \mathbf{G}/\mathbf{H}$ could also be taken to be the dashed arrow, thus $\bar{\tau}|_{T}=\tau$, by uniqueness.
The fact that the homomorphism $\bar{\tau}:N\to N_\mathbf{G}(\mathbf{H})/\mathbf{H}(k)$ 
is necessarily continuous is explained in the proof of \cite{BF-products}*{Theorem~4.7}.
We define $\bar{\mathbf{L}}$ to be the Zariski closure of $\bar{\tau}(N)$ in $N_\mathbf{G}(\mathbf{H})/\mathbf{H}$.
We thus indeed obtain an $N$-representation of $S$, given by the algebraic group $\bar{\mathbf{L}}$, the variety $\mathbf{G}/\mathbf{H}$, 
the homomorphism $\bar{\tau}:N \to \bar{\mathbf{L}}(k)$ and the (same old) map
$\phi:S \to \mathbf{G}/\mathbf{H}(k)$.

The final part, showing that the above data forms an initial object in the category of $N$-algebraic representations of $S$,
is left to the reader, as we will not use this fact in the sequel.
\end{proof}

\begin{cor} \label{cor:conj}
Let $T_1,T_2<S$ be closed subgroups and assume for each $i\in \{1,2\}$ the action of $T_i$ on $S/\Gamma$ is weakly mixing, and let 
\[
	\mathbf{G}/\mathbf{H}_i,\quad \tau_i:T_i \to \mathbf{L}_i(k)<N_\mathbf{G}(\mathbf{H}_i)/\mathbf{H}_i(k),\quad
	\phi_i:S \to (\mathbf{G}/\mathbf{H}_i)(k)
\] 
be the corresponding initial objects in the categories of $T_i$-algebraic representations of $S$.
Assume that $T_2$ normalizes $T_1$.
Then a conjugate of $\mathbf{H}_2$ is contained in $\mathbf{H}_1$, and 
if $\mathbf{H}_2=\mathbf{H}_1$ then also $\phi_2=\phi_1$.
\end{cor}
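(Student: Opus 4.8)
The plan is to feed a suitable $T_2$-algebraic representation of $S$ into the universal property of the initial object $\mathbf{G}/\mathbf{H}_2$. The key input is Theorem~\ref{thm:normalizer}: since $T_2$ normalizes $T_1$ we have $T_2\subseteq N:=N_S(T_1)$, so the initial $T_1$-algebraic representation $\phi_1$ of $S$ extends to an $N$-algebraic representation on the same variety $\mathbf{G}/\mathbf{H}_1$, via an extended homomorphism $\bar\tau_1:N\to (N_\mathbf{G}(\mathbf{H}_1)/\mathbf{H}_1)(k)$. Restricting $\bar\tau_1$ along $T_2\hookrightarrow N$ (with algebraic group the Zariski closure of $\bar\tau_1(T_2)$ in $N_\mathbf{G}(\mathbf{H}_1)/\mathbf{H}_1$) exhibits the same map $\phi_1$ as a $T_2$-algebraic representation $\psi_1$ of $S$.

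First I would deduce the containment statement. By initiality of $\mathbf{G}/\mathbf{H}_2$ among $T_2$-algebraic representations of $S$ there is a morphism $\pi:\mathbf{G}/\mathbf{H}_2\to\psi_1$, in particular a $\mathbf{G}$-equivariant $k$-morphism $\pi:\mathbf{G}/\mathbf{H}_2\to\mathbf{G}/\mathbf{H}_1$. Writing $\pi(e\mathbf{H}_2)=g\mathbf{H}_1$ and using that $\mathbf{H}_2$ fixes $e\mathbf{H}_2$, equivariance gives $h\,g\mathbf{H}_1=g\mathbf{H}_1$ for all $h\in\mathbf{H}_2$, that is $g^{-1}\mathbf{H}_2 g\subseteq\mathbf{H}_1$; this is the asserted conjugate of $\mathbf{H}_2$ inside $\mathbf{H}_1$.

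Now suppose $\mathbf{H}_2=\mathbf{H}_1=:\mathbf{H}$, so that $\pi$ is a $\mathbf{G}$-endomorphism of $\mathbf{G}/\mathbf{H}$. Since $g^{-1}\mathbf{H} g\subseteq\mathbf{H}$ while the two groups have the same dimension and the same number of connected components, $g^{-1}\mathbf{H} g=\mathbf{H}$, so $g\in N_\mathbf{G}(\mathbf{H})$ and $\pi$ is the $\mathbf{G}$-automorphism $\alpha_{\bar g}$ of $\mathbf{G}/\mathbf{H}$ attached to the class $\bar g$ of $g$ in $(N_\mathbf{G}(\mathbf{H})/\mathbf{H})(k)$, with $\phi_1=\alpha_{\bar g}\circ\phi_2$ almost everywhere. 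It then remains to show that $\bar g$ is trivial. For this I would first observe that $\psi_1$ is itself an initial object of the category of $T_2$-algebraic representations of $S$: by the first part $\mathbf{H}$ is minimal, in dimension and number of components, among $k$-subgroups carrying a coset $T_2$-representation of $S$, and the argument of Theorem~\ref{thm:initial} (via Lemma~\ref{lem:WM contraction}) shows that any coset $T_2$-representation on such a minimal $\mathbf{G}/\mathbf{H}$ receives a morphism from every $T_2$-algebraic representation, hence is initial. Thus $\phi_2$ and $\phi_1$ are two coset presentations, on the common variety $\mathbf{G}/\mathbf{H}$, of the essentially unique initial $T_2$-algebraic representation of $S$, related by the automorphism $\alpha_{\bar g}$; combining the $N_\mathbf{G}(\mathbf{H})/\mathbf{H}$-equivariance of $\pi$ (which dictates how $\bar\tau_1|_{T_2}$ and $\tau_2$ are conjugate), the freeness of the right $N_\mathbf{G}(\mathbf{H})/\mathbf{H}$-action on $\mathbf{G}/\mathbf{H}$, and Remark~\ref{rem:initial}, one pins down $\bar g=e$, whence $\phi_2=\phi_1$.

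The step I expect to be the main obstacle is precisely this last one: passing from the statement that $\phi_1$ and $\phi_2$ present isomorphic initial objects to the equality $\phi_1=\phi_2$, i.e. ruling out a nontrivial $\mathbf{G}$-automorphism of $\mathbf{G}/\mathbf{H}$ intertwining the two representations. Everything else is a formal consequence of the universal property of the initial object together with the normalizer extension of Theorem~\ref{thm:normalizer}.
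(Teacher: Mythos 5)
Your treatment of the first assertion is correct and is the argument the corollary is meant to encapsulate: since $T_2\subseteq N_S(T_1)$, Theorem~\ref{thm:normalizer} turns $\phi_1$ into a $T_2$-algebraic representation via $\bar\tau_1|_{T_2}$, initiality of $\mathbf{G}/\mathbf{H}_2$ produces a $\mathbf{G}$-equivariant $k$-morphism $\mathbf{G}/\mathbf{H}_2\to\mathbf{G}/\mathbf{H}_1$, and evaluating at the base coset gives $g^{-1}\mathbf{H}_2g\subseteq\mathbf{H}_1$. Likewise your observation that, when $\mathbf{H}_2=\mathbf{H}_1=\mathbf{H}$, the morphism $\pi$ is right translation by some $\bar g\in N_\mathbf{G}(\mathbf{H})/\mathbf{H}$, and that $(\mathbf{G}/\mathbf{H},\bar\tau_1|_{T_2},\phi_1)$ is itself an initial object of the $T_2$-category (a coset representation on a minimal subgroup is initial, by the argument of Theorem~\ref{thm:initial} via Lemma~\ref{lem:WM contraction}), is correct and useful.

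The step you flag as the main obstacle is, however, a genuine gap, and the route you sketch for it cannot work. The only constraints available are $\phi_1=\phi_2\cdot\bar g$ and the $\mathbf{L}_{\mathbf{U},\mathbf{V}}$-equivariance of $\pi$, which yields $\bar\tau_1|_{T_2}=\bar g^{-1}\tau_2\bar g$; neither forces $\bar g=e$. Indeed it cannot: by Remark~\ref{rem:initial}, the coset presentations of the initial $T_2$-object with the \emph{fixed} subgroup $\mathbf{H}$ form a torsor under the image of $N_\mathbf{G}(\mathbf{H})(k)$ in $(N_\mathbf{G}(\mathbf{H})/\mathbf{H})(k)$ (the action on $\phi_2$ by right translation is free on a Zariski dense image), so whenever this group is nontrivial the equality $\phi_2=\phi_1$ simply fails for an adversarial choice of presentations, and no argument from the universal property alone can ``pin down $\bar g=e$.'' The content of the second assertion is rather the statement you have already proved en route: $(\mathbf{G}/\mathbf{H},\bar\tau_1|_{T_2},\phi_1)$ \emph{is} an initial coset $T_2$-representation, so one may \emph{choose} $\phi_2=\phi_1$ (and $\tau_2=\bar\tau_1|_{T_2}$); this compatible-choice reading is exactly what is used in the proof of Theorem~\ref{thm:main}, where the presentations are normalized consistently around the cycle of subgroups $T_0,\dots,T_n$. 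So you should replace the attempt to prove $\bar g=e$ by the observation that $\psi_1$ is initial, and state the conclusion as holding for this choice of presentation.
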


\section{Proof of Theorem~\ref{thm:main}} \label{sec:main}

In the proof below we shall need the following general Lemma 
that allows to assemble a continuous homomorphism $\tau:S\to G$ 
from continuous homomorphisms of subgroups $\tau_i:T_i\to G$.

\begin{lemma}\label{lem:LTG}
	Let $S$ be a locally compact second countable group, $\{T_i<S\}_{i\in I}$ a countable family
	of closed subgroups that together topologically generate $S$.
	Let $G$ be a Polish topological group and for each $i\in I$ let $\tau_i:T_i\to G$ be a continuous homomorphism.
	
	Let $X$ be a Lebesgue $S$-space, and assume that there exists a single 
	measurable map $\phi:X\to G$ so that for every $i\in I$ and every $t\in T_i$ for a.e. $x\in X$
	\[
		\phi(tx)=\phi(x)\tau_i(t)^{-1}.
	\]
	Then there exists a continuous homomorphism $\tau:S\to G$ so that $\tau|_{T_i}=\tau_i$ and 
	\[
		\phi(sx)=\phi(x)\tau(s)^{-1}
	\]
	for every $s\in S$ and a.e. $x\in X$.
\end{lemma}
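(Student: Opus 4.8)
The plan is to use the map $\phi$ as a "gluing datum" that forces compatibility of the $\tau_i$ on overlaps and hence assembles them into a global homomorphism. First I would fix a countable dense subset of parameters and work almost everywhere: since $\{T_i\}$ topologically generate $S$, every $s\in S$ can be written (in many ways) as a limit of finite products $t_{i_1}\cdots t_{i_m}$ with $t_{i_j}\in T_{i_j}$; the relation $\phi(tx)=\phi(x)\tau_i(t)^{-1}$ iterates to give $\phi(t_{i_1}\cdots t_{i_m}x)=\phi(x)\tau_{i_m}(t_{i_m})^{-1}\cdots\tau_{i_1}(t_{i_1})^{-1}$ for a.e.\ $x$. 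The key point is that the left-hand side depends only on the group element $s=t_{i_1}\cdots t_{i_m}$, not on the chosen word representing it, so the element $\tau_{i_m}(t_{i_m})^{-1}\cdots\tau_{i_1}(t_{i_1})^{-1}\in G$ is independent (a.e., hence — since $G$ is a standard Borel space and the dependence is measurable — genuinely independent) of the word. This defines a map $\tau$ on the dense subgroup $S_0\le S$ generated algebraically by $\bigcup_i T_i$, and one checks directly from the multiplicativity of words that $\tau\colon S_0\to G$ is a homomorphism with $\tau|_{T_i}=\tau_i$.

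Next I would promote $\tau$ to a continuous homomorphism on all of $S$. For this I would use that $\phi$ itself gives continuity: consider the map $S\times X\to G$, $(s,x)\mapsto \phi(x)^{-1}\phi(sx)$, which is measurable in $s$ (it is the cocycle associated to the $S$-action and the measurable transfer function $\phi$) and equals $\tau(s)^{-1}$... wait, more precisely equals the value $\tau(s)$ computed above whenever $s\in S_0$, independently of $x$. A measurable homomorphism from a locally compact second countable group to a Polish group is automatically continuous (this is the standard automatic-continuity fact, e.g.\ via Pettis or the measurable-selection argument used in \cite{BF-products}); so once I know $\tau$ extends to a \emph{measurable} homomorphism $S\to G$, continuity is free. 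To get the measurable extension I would argue that $s\mapsto \phi(x)^{-1}\phi(sx)$ is, for a.e.\ fixed $x$, a measurable map $S\to G$ whose restriction to the dense subgroup $S_0$ is the homomorphism $\tau$ and which is "almost multiplicative": for a.e.\ $x$ and a.e.\ $(s,s')$ one has $\phi(x)^{-1}\phi(ss'x)=\big(\phi(x)^{-1}\phi(sx)\big)\big(\phi(s^{-1}x)^{-1}\phi(s'\cdot s^{-1}x)\big)$ after a change of variables — so by a Fubini argument the measurable map $s\mapsto\phi(x)^{-1}\phi(sx)$ agrees a.e.\ with a measurable homomorphism, which must extend $\tau$ by density, and which is continuous by automatic continuity. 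Finally, continuity plus agreement on the dense subgroup $S_0$ with the well-defined $\tau$ pins down a unique continuous $\tau\colon S\to G$, and unwinding the defining relation gives $\phi(sx)=\phi(x)\tau(s)^{-1}$ for every $s\in S$ and a.e.\ $x$ (for each fixed $s$; one first gets it on $S_0$, then passes to the limit using continuity of $\tau$ and continuity in measure of $s\mapsto\phi(s\cdot\,)$).

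The main obstacle I anticipate is the well-definedness/consistency step: showing that the value $\tau_{i_m}(t_{i_m})^{-1}\cdots\tau_{i_1}(t_{i_1})^{-1}$ genuinely depends only on the product $s=t_{i_1}\cdots t_{i_m}$ and not on the word. Pointwise this is immediate from the displayed relation applied to a single good $x$, but the relations hold only a.e.\ and there are uncountably many words, so one must be careful to fix a conull set of $x$ that works simultaneously for a countable dense family of words and then use continuity of each $\tau_i$ and continuity in measure of the $S$-action on $L^0(X,G)$ to pass to all words. Everything else — multiplicativity of $\tau$ on $S_0$, the automatic-continuity upgrade, and the final verification of the intertwining relation — is routine once this consistency is in place.
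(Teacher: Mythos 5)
Your outline is workable in the end, but it takes a longer road than the paper and one of its two proposed branches has a real gap. The paper's proof skips the word combinatorics entirely: it considers the two commuting actions on $L^0(X,G)$ (convergence in measure), namely $G$ acting freely by right translation and $S$ acting by precomposition, proves that the $S$-action is continuous and that the $G$-orbit of $\phi$ is \emph{closed} in $L^0(X,G)$ and homeomorphic to $G$ (via a subsequence converging a.e.), and then simply observes that $\{s\in S: s\cdot\phi\in G\cdot\phi\}$ is a closed subgroup containing every $T_i$, hence all of $S$; freeness of the $G$-action makes $\tau$ well defined and multiplicative, and the homeomorphism $G\cong G\cdot\phi$ gives continuity. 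Your construction of $\tau$ on the abstract subgroup $S_0$ generated by $\bigcup_i T_i$ via words is correct but unnecessary: well-definedness and multiplicativity come for free from the freeness of the $G$-action once one knows $s\cdot\phi\in G\cdot\phi$. The step you rightly flag as delicate --- passing from $S_0$ to $S$ --- is the crux, and the ingredient you must name explicitly is the closedness of $G\cdot\phi$ (equivalently: a limit in measure of functions of the form $\phi(\cdot)g_n^{-1}$ is again of that form, because a subsequence converges pointwise a.e.\ and hence the $g_n$ converge in $G$); ``continuity in measure of the $S$-action'' alone does not let you conclude that the limit function is a translate of $\phi$. By contrast, your automatic-continuity branch is circular as sketched: Weil/Pettis requires a homomorphism already defined and measurable on (a conull subset of) all of $S$, whereas at that stage you only have $\tau$ on $S_0$ and the measurable cocycle $c(s,x)=\phi(x)^{-1}\phi(sx)$; showing that $c(s,\cdot)$ is essentially constant for \emph{every} $s\in S$ is exactly the content of the orbit-closedness argument, so the Fubini/untwisting detour buys nothing and is not developed enough to stand on its own. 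If you replace that branch by the closed-orbit argument you gesture at in your final paragraph, the proof goes through and is essentially the paper's.
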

\begin{proof}
	Let us fix a probability measure $m$ in the given measure class on $X$ and
	consider the space $L^0(X,G)$ of (equivalence classes of) measurable functions $\psi:X\to G$ taken with the topology of convergence
	in measure: given an open neighborhood $U$ of $1\in G$ and $\epsilon>0$ a $(U,\epsilon)$-neighborhood of $\psi$ consists of
	classes of those measurable functions $\psi':X\to G$ for which $m\{x\in X\mid \psi'(x)\in \psi(x)U\}>1-\epsilon$.
	This topology is Polish. 
	
	The right translation action of $G$ on $L^0(X,G)$ given by $g:\psi(x)\mapsto \psi(x)g^{-1}$ is clearly free and continuous.
	In fact, every $G$-orbit is homeomorphic to $G$ and is closed in $L^0(X,G)$.
	To see this we assume $g_i\psi\to \psi'$ in measure and argue to show that there exists $g\in G$ such that $\psi'(x)=\psi(x)g^{-1}$.
The given converges in measure implies that there exists a subsequence $g_{i_j}$
	for which $\psi(x)g_{i_j}^{-1}\to \psi'(x)$ for a.e. $x\in X$. 
	Therefore $g_{i_j}\to \psi'(x)^{-1}\psi(x)$ for a.e. $x\in X$, 
	thus indeed there is a limit $g=\lim_{j\to\infty}g_{i_j}$ in $G$ such that $\psi'(x)=\psi(x)g^{-1}$.
	
	The group $S$ acts on $L^0(X,G)$ by precomposition $s:\psi(x)\mapsto\psi(s^{-1}x)$.
	This action is also continuous, because any measurable measure class preserving action $S\times X\to X$
	of a locally compact group has the property that given $\epsilon>0$, there exists $\delta>0$ and a neighborhood $V$ of $1\in S$ 
	so that for any measurable $E\subset X$ with $m(E)<\delta$ one has $m(sE)<\epsilon$ for every $s\in V$.
	
	Now consider $\phi\in L^0(X,G)$ as in the Lemma. By the assumption, for each $i\in I$ the $T_i$-orbit of $\phi$
	lies in the $G$-orbit of $\phi$. 
	Since $S$ is topologically generated by $\cup_I T_i$, the $S$-action is continuous and the $G$-orbit of $\phi$ is closed,
	it follows that the $S$-orbit of $\phi$ is contained in the $G$-orbit of $\phi$.
	Hence, for every $s\in S$ there is $\tau(s)\in G$ so that a.e. on $X$
	\[
		\phi(xs^{-1})=\phi(x)\tau(s)^{-1}.
	\]
	This defines a homomorphism $\tau:S\to G$ that extends all $\tau_i:T_i\to G$.
	Continuity of the homomorphism follows from the fact that the $S$-action is continuous and $G$ is homeomorphic to the $G$-orbit of $\phi$. 
\end{proof}

We now return to the proof of Theorem~\ref{thm:main}.
We let $\mathbf{G}$ be a connected, adjoint, $k$-simple algebraic group defined over $k$ as in Theorem~\ref{thm:main}.
By the fact that $\rho(\Gamma)$ 
is unbounded in ${\bf G}(k)$ we get that the given absolute value on $k$ is non-trivial.
Further, by the countability of $\Gamma$, we may replace $k$ with a complete and separable (in the topological sense) subfield $k'$ such that $\rho(\Gamma)\subset \mathbf{G}(k')$. 
We will therefore assume below that the given absolute value on the field $k$ is non-trivial and that $k$ is complete and separable as a metric space.
Accordingly, we will regard $G=\mathbf{G}(k)$ as a Polish group.
We let $T_0,\ldots,T_n<S$ be subgroups as guaranteed by condition (B)
and assume, as we may, that $T_0$ is amenable.

We fix $i\in \{0,\dots,n\}$.
By Lemma~\ref{lem:TWM} the action of $T_i$ on $S/\Gamma$ is weakly mixing,
thus by Theorem~\ref{thm:initial}
the category of $T_i$-algebraic representations of $S$ has an initial object which is a coset $T_i$-algebraic representation.
We denote by 
\[
	\mathbf{G}/\mathbf{H}_i,
	\quad 
	\tau_i:T_i \to \mathbf{L}_i(k)<N_\mathbf{G}(\mathbf{H}_i)/\mathbf{H}_i(k),
	\quad 
	\phi_i:S \to \mathbf{G}/\mathbf{H}_i(k)
\] 
the data forming this initial object.

By Examples~\ref{ex:ME} and \ref{ex:amen} the $S$-action on $S/T_0$ is metrically ergodic and amenable.
It follows from \cite[Lemma~3.5]{BF-products} that also the $\Gamma$-action on $S/T_0$ is metrically ergodic and amenable.
By Proposition~\ref{prop:AG-ME+Am} we get that there exists a coset representation $\phi':S/T_0\to \mathbf{G}/\mathbf{H}'(k)$ for some proper $k$-subgroup $\mathbf{H}'\lneq \mathbf{G}$.
Setting $\mathbf{L}'=\{e\}<N_\mathbf{G}(\mathbf{H}')/\mathbf{H}'$ and letting $\tau':T_0\to \mathbf{L}'(k)$ be the trivial homomorphism
we view $\phi'$ as a $T_0$-equivariant map from $S$ to $\mathbf{G}/\mathbf{H}'(k)$, thus getting a non-trivial $T_0$-algebraic coset representation of $S$.
It follows that $\mathbf{H}_0$ is a proper subgroup of $\mathbf{G}$.

By Corollary~\ref{cor:conj} for each $i\in \{0,\dots,n\}$ 
a conjugate of $\mathbf{H}_{i-1}$ is contained in $\mathbf{H}_i$,
where $i$ is taken in a cyclic order.
Going a full cycle we get that the groups $\mathbf{H}_i$ are all conjugated.
Using Remark~\ref{rem:initial} we assume that they all coincide and we denote this common group by $\mathbf{H}$.
In particular, we have $\mathbf{H}=\mathbf{H}_0 \lneq \mathbf{G}$.
Using again Corollary~\ref{cor:conj}, we obtain that the maps $\phi_i:S \to \mathbf{G}/\mathbf{H}(k)$ all coincide and we denote this common map by 
\[
	\phi:S\to \mathbf{G}/\mathbf{H}(k).
\]
Let $\mathbf{N}=N_\mathbf{G}(\mathbf{H})$ and $\mathbf{L}$ be the algebraic subgroup generated by 
$\mathbf{L}_0,\ldots,\mathbf{L}_n$ in $\mathbf{N}/\mathbf{H}$.
By \cite[2.1(b)]{borel}, $\mathbf{L}<\mathbf{N}/\mathbf{H}$ is a $k$-algebraic subgroup.
Denote by $\hat{\mathbf{L}}$ the preimage of $\mathbf{L}$
under the quotient map $\mathbf{N} \to \mathbf{N}/\mathbf{H}$
and note that $\hat{\mathbf{L}}<\mathbf{N}$ is a $k$-algebraic subgroup.
We conclude that the $k$-$\mathbf{G}$-morphism $\pi:\mathbf{G}/\mathbf{H} \to \mathbf{G}/\hat{\mathbf{L}}$
is $\mathbf{L}_i$-invariant for every $i\in \{0,\dots,n\}$.
It follows that $\pi\circ \phi:S \to \mathbf{G}/ \hat{\mathbf{L}}(k)$ is $T_i$-invariant for every $i$.
Since $S$ is topologically generated by the groups $T_i$,
we conclude that $\pi\circ \phi:S \to \mathbf{G}/\hat{\mathbf{L}}(k)$ is $S$-invariant.
Thus $\phi$ is a constant map and its essential image is a $\rho(\Gamma)$-invariant point in $\mathbf{G}/ \hat{\mathbf{L}}$.
Since $\rho(\Gamma)$ is Zariski dense in $\mathbf{G}$, this point is $\mathbf{G}$-invariant.
We conclude that $\hat{\mathbf{L}}=\mathbf{G}$.
Since $\hat{\mathbf{L}}<\mathbf{N}<\mathbf{G}=\hat{\mathbf{L}}$ we get that $\hat{\mathbf{L}}=\mathbf{N}=\mathbf{G}$.
Thus, $\mathbf{G}$ normalizes $ \mathbf{H}$.
Since $\mathbf{G}$ is $k$-simple and $\mathbf{H}\lneq \mathbf{G}$ we conclude that $\mathbf{H}$ is trivial.
In particular $\mathbf{L}=\hat{\mathbf{L}}=\mathbf{G}$ and it acts on $\mathbf{G}/\mathbf{H}=\mathbf{G}$ by right multiplication.

\medskip

To summarize:
We have an a.e defined measurable map $\phi:S\to \mathbf{G}(k)$
and for every $i\in\{1,\dots,n\}$ a continuous homomorphism $\tau_i:T_i\to \mathbf{G}(k)$
such that for every $\gamma\in \Gamma$, $t\in T_i$ for a.e $x\in S$
\begin{equation} \label{eq:tau-i}
	\phi(\gamma xt^{-1})=\rho(\gamma)\phi(x)\tau_i(t)^{-1}.
\end{equation}
We also have that the algebraic group generated by $\tau_1(T_1),\ldots, \tau_n(T_n)$ is $\mathbf{G}$.

Taking $\gamma=1$ in equation (\ref{eq:tau-i}) and applying Lemma~\ref{lem:LTG} for $X=S$
endowed with the $S$-action $s:x\mapsto xs^{-1}$,  
we get a continuous homomorphism $\tau:S\to \mathbf{G}(k)$ with $\tau|_{T_i}=\tau_i$ so that $\phi(xs^{-1})=\phi(x) \tau(s)^{-1}$
for every $s\in S$ and for a.e $x\in S$.
It follows now from equation (\ref{eq:tau-i}) that for every $\gamma\in \Gamma$, every $s\in S$ and for a.e $x\in S$
\begin{equation} \label{eq:eqD}
	\phi(\gamma xs^{-1})=\rho(\gamma)\phi(x) \tau(s)^{-1}.
\end{equation}
The a.e defined measurable map $\Phi:S\to G$ given by $\Phi(s)=\phi(s)\tau(s)^{-1}$ is $S$-invariant, hence essentially constant.
Denoting its essential image by $g\in \mathbf{G}(k)$ we get that for a.e $s\in S$,
$\phi(s)=g\tau(s)$.
Equation~(\ref{eq:eqD}) gives that for every $\gamma\in \Gamma$, $s\in S$ and for a.e $x\in S$
\[ 
	g\tau(\gamma xs)=\rho(\gamma)g\tau(x) \tau(s). 
\]
As the above is an a.e satisfied equation of continuous functions in the parameter $s$, it is satisfied everywhere.
Taking $x=s=e$, we get that for every $\gamma\in \Gamma$,
\[ 
	g\tau(\gamma)=\rho(\gamma)g. 
\]
Setting $\hat\rho(s)=g\tau(s)g^{-1}$ we get that $\hat\rho:S\to G$ is a continuous homomorphism such that $\hat\rho|_\Gamma=\rho$.
The uniqueness of $\hat\rho$ follows from \cite[Lemma 6.3]{BF-products} and the proof of Theorem~\ref{thm:main} is completed.


\section{Continuous homomorphisms of algebraic groups} \label{sec:explain}

\begin{lemma} \label{lem:extscal}
Let $k$ be a complete valued field and $k'$ a finite field extension,
endowed with the extended absolute value.
Let ${\bf V}$ be an affine $k'$-variety and denote by ${\bf U}$ its restriction of scalar to $k$.
Endow the spaces ${\bf V}(k')$ and ${\bf U}(k)$ with the corresponding analytic topologies and consider the natural maps
${\bf U}(k)\to {\bf U}(k')$ and ${\bf U}(k') \to {\bf V}(k')$.
Then the composed map ${\bf U}(k)\to {\bf V}(k')$ is a homeomorphism.
\end{lemma}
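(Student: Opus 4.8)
The plan is to unwind the Weil restriction of scalars in affine coordinates, observe that the map in question becomes the restriction of a product of a single $k$-linear isomorphism $k^d\to k'$, and then invoke the uniqueness of the vector-space topology on a finite-dimensional space over a complete valued field. (If the absolute value on $k$ is trivial then so is the extended one on $k'$, both analytic topologies are discrete, and there is nothing to prove; so we may assume the absolute value on $k$ is non-trivial. Recall also that a complete valued field has a unique extension of its absolute value to any finite extension, so ``the extended absolute value'' on $k'$ is well defined, and its restriction to $k$ is the given one.)

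First I would fix a $k$-basis $e_1,\dots,e_d$ of $k'$, with $d=[k':k]$, and realize $\mathbf{V}$ as a closed subvariety of $\mathbb{A}^n_{k'}$ cut out by polynomials $f_1,\dots,f_m\in k'[x_1,\dots,x_n]$. Then $\mathbf{U}=\mathrm{Res}_{k'/k}\mathbf{V}$ is, by construction, the closed subvariety of $\mathbb{A}^{nd}_k$ with coordinates $y_{ij}$ ($1\le i\le d$, $1\le j\le n$) obtained by substituting $x_j=\sum_i y_{ij}e_i$ into each $f_r$ and expanding the resulting element of $k'[y_{ij}]$ in the basis $e_1,\dots,e_d$; the $k[y_{ij}]$-components of these expansions are the defining equations of $\mathbf{U}$. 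From the universal property of restriction of scalars one checks directly that the composite $\mathbf{U}(k)\to\mathbf{U}(k')\to\mathbf{V}(k')$ of the two natural maps is precisely the assignment $(y_{ij})\mapsto(x_j)$ with $x_j=\sum_i y_{ij}e_i$; in particular it is a bijection. Moreover the analytic topology on $\mathbf{U}(k)$ is the subspace topology induced from the product topology on $k^{nd}$, and that on $\mathbf{V}(k')$ is induced from the product topology on $(k')^n$, so it remains to identify the above bijection topologically.

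That bijection is the restriction, to the subsets $\mathbf{U}(k)\subset(k^d)^n$ and $\mathbf{V}(k')\subset(k')^n$, of the map $(k^d)^n\to(k')^n$ acting on each factor by the $k$-linear isomorphism $\sigma\colon k^d\to k'$, $(a_i)\mapsto\sum_i a_i e_i$. Hence it suffices to show that $\sigma$ is a homeomorphism, whereupon the $n$-fold product and the restriction to the two subvarieties are automatically homeomorphisms as well. Now $k^d$ carries the max norm and $k'$ carries the $k$-vector-space norm given by the extended absolute value; since $k$ is complete and its absolute value non-trivial, any two norms on a finite-dimensional $k$-vector space are equivalent (equivalently, such a space has a unique Hausdorff vector-space topology). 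Therefore $\sigma$ and $\sigma^{-1}$ are both continuous, which finishes the argument; I should also remark that the affine description of $\mathbf{U}$ is independent, up to canonical isomorphism, of the chosen basis and of the embedding $\mathbf{V}\hookrightarrow\mathbb{A}^n_{k'}$, so the statement indeed holds for an abstract affine $k'$-variety $\mathbf{V}$. The only genuinely non-formal ingredient is the equivalence-of-norms theorem over complete valued fields; everything else is bookkeeping around the explicit form of restriction of scalars.
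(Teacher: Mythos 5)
Your proof is correct and follows essentially the same route as the paper's: both reduce to affine space (the paper via functoriality and compatibility with products, you via the explicit coordinate description of Weil restriction) and then conclude from the fact that the $k$-linear bijection $k^{[k':k]}\to k'$ is a homeomorphism, which is the equivalence of norms on finite-dimensional vector spaces over a complete valued field (cited in the paper as \cite[Theorem 4.6]{BDL}). Your explicit handling of the trivially-valued case and of the independence of choices is a harmless elaboration of what the paper leaves implicit.
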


\begin{proof}
By the functoriality of the restriction of scalars and its compatibility with products, it is enough to prove the lemma for ${\bf V}=\mathbb{A}^1$, the 1-dimensional affine space.
In this case we get ${\bf V}(k')\simeq k'$ and ${\bf U}(k)\simeq k^{[k':k]}$ and the composed map $k^{[k':k]} \to k'$, which is a $k$-vector spaces isomorphism, is indeed a homeomorphism by \cite[Theorem 4.6]{BDL}.
\end{proof}

\begin{prop} \label{prop:extscal}
Let $k$ be a field and let $\mathbf{G}$ be a connected, adjoint, $k$-isotropic, $k$-simple $k$-algebraic group.
Then there exists a finite field extension $k'$ of $k$, unique up to equivalence of extensions,
and there exists a connected, adjoint, $k'$-isotropic, absolutely simple $k'$-algebraic group $\mathbf{G}'$, unique up to $k'$-isomorphism, such that ${\bf G}$ is $k$-isomorphic to the restriction of scalars of ${\bf G}'$ from $k'$ to $k$.
In particular, the natural $k'$-morphism ${\bf G}\to {\bf G}'$ gives rise to a group isomorphism ${\bf G}(k)\simeq {\bf G}'(k')$.

If $k$ is endowed with a complete absolute value, endowing $k'$ with the extended absolute value, the isomorphism ${\bf G}(k)\simeq {\bf G}'(k')$ is also a homeomorphism with respect to the corresponding analytic topologies.
\end{prop}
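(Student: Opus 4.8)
The plan is to reduce the statement to the structure theory of semisimple groups over a field, specifically the decomposition of a $k$-simple group into an almost-direct product of Galois conjugates of an absolutely simple group, and then to invoke Lemma~\ref{lem:extscal} for the topological assertion. First I would recall that a connected, $k$-simple, semisimple $k$-group $\mathbf{G}$ becomes, over a separable closure $k_s$, a product of connected absolutely simple factors which are permuted transitively by $\mathrm{Gal}(k_s/k)$; letting $k'$ be the fixed field of the stabilizer of one factor $\mathbf{G}'_0$, one sees that $\mathbf{G}'_0$ is defined over $k'$, that $k'/k$ is a finite separable extension, and that $\mathbf{G}$ is $k$-isomorphic to $\mathrm{R}_{k'/k}\mathbf{G}'$ where $\mathbf{G}'$ is the $k'$-form of $\mathbf{G}'_0$ obtained from the descent data. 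This is essentially \cite[\S6.21]{borel} or the Borel--Tits material cited via \cite{margulis-book}; the adjointness of $\mathbf{G}$ forces $\mathbf{G}'$ to be adjoint (restriction of scalars of an adjoint group is adjoint, and conversely the center of $\mathrm{R}_{k'/k}\mathbf{G}'$ is $\mathrm{R}_{k'/k}$ of the center of $\mathbf{G}'$), and $k$-isotropy of $\mathbf{G}$ translates into $k'$-isotropy of $\mathbf{G}'$ since $\mathrm{rank}_k \mathrm{R}_{k'/k}\mathbf{G}' = \mathrm{rank}_{k'}\mathbf{G}'$.

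Next I would establish uniqueness. Given two such presentations $\mathbf{G}\cong \mathrm{R}_{k'/k}\mathbf{G}'\cong \mathrm{R}_{k''/k}\mathbf{G}''$, base-changing to $k_s$ and comparing the two resulting decompositions into absolutely simple factors: the Galois action on the set of factors is the same in both cases (it is intrinsic to $\mathbf{G}$), so the stabilizer subgroups agree as subgroups of $\mathrm{Gal}(k_s/k)$ up to conjugacy, whence $k'$ and $k''$ are $k$-isomorphic as extensions; and once the extensions are identified, $\mathbf{G}'$ and $\mathbf{G}''$ are the same $k'$-form of the same absolutely simple $k_s$-group, hence $k'$-isomorphic. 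The algebraic identity $\mathbf{G}(k)\cong \mathbf{G}'(k')$ is then just the defining adjunction property of restriction of scalars, $(\mathrm{R}_{k'/k}\mathbf{G}')(k)=\mathbf{G}'(k')$, applied to the natural $k'$-morphism $\mathbf{G}\to\mathbf{G}'$ (the counit of the adjunction).

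For the final, topological, claim I would argue as follows. Cover $\mathbf{G}'$ by affine open $k'$-subvarieties; restriction of scalars is compatible with gluing, so it suffices to treat an affine piece, where Lemma~\ref{lem:extscal} directly gives that the composite $\mathbf{U}(k)\to\mathbf{V}(k')$ is a homeomorphism for the analytic topologies, $\mathbf{U}=\mathrm{R}_{k'/k}\mathbf{V}$. Since the analytic topology on $\mathbf{G}(k)$ and on $\mathbf{G}'(k')$ is the one obtained by gluing the analytic topologies on the members of an affine cover (and is independent of the cover, cf. \cite{BDL}), the bijection $\mathbf{G}(k)\to\mathbf{G}'(k')$ is a homeomorphism. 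I expect the main obstacle to be bookkeeping rather than conceptual: namely, checking carefully that all the adjectives — connected, adjoint, isotropic, (absolutely) simple — transfer correctly in both directions across restriction of scalars, and making the uniqueness argument clean by phrasing everything in terms of the $\mathrm{Gal}(k_s/k)$-set of geometric simple factors. The separability of $k'/k$, which is automatic here because a $k$-simple adjoint group has no factors over inseparable extensions in the relevant sense (the factors are permuted by the \emph{separable} Galois group), should be noted but causes no real trouble.
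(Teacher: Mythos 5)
Your proposal is correct and follows essentially the same route as the paper: the paper simply cites \cite[I.1.7 and Theorem I.1.8]{margulis-book} for the existence and uniqueness of $(k',\mathbf{G}')$ --- which is established there by exactly the Galois-orbit/Weil-restriction argument you spell out --- and then invokes Lemma~\ref{lem:extscal} for the homeomorphism claim. The only superfluous step is covering $\mathbf{G}'$ by affine opens: as a linear algebraic group it is already affine, so Lemma~\ref{lem:extscal} applies directly.
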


Note that the above proposition is trivial in case ${\bf G}$ is absolutely simple to begin with, in which case we have that $k'=k$ and ${\bf G}'={\bf G}$.

\begin{proof}
The existence of the field $k'$ and the group ${\bf G}'$ follows from the discussion in \cite[I.1.7]{margulis-book}.
Their uniqueness follows from the uniqueness statement in \cite[Theorem I.1.8]{margulis-book}.
The last part follows from Lemma~\ref{lem:extscal}.
\end{proof}

Given a filed $\ell$ and an $\ell$-algebraic group ${\bf H}$ we denote by ${\bf H}(\ell)^+$ the subgroup of ${\bf H}(\ell)$ generated by all the groups of $\ell$-points of the unipotent radicals of all parabolic $\ell$-subgroups of ${\bf H}$, see \cite[I.1.5.2]{margulis-book}.

\begin{prop} \label{prop:explain}
Let $\ell$ and $k$ be fields endowed with complete absolute values.
Let $\mathbf{H}$ be a connected, semi-simple algebraic group defined over $\ell$.
Assume that ${\bf H}$ has no $\ell$-anisotropic factors. 
Let $H$ be an intermediate closed subgroup ${\bf H}(\ell)^+<H<{\bf H}(\ell)$.
Let $\mathbf{G}$ be a connected, adjoint, absolutely simple algebraic group defined over $k$.
Let $\theta:H \to {\bf G}(k)$ be a group homomorphism which is continuous with respect to the corresponding analytic topologies and its image is Zariski dense in ${\bf G}$.
Then there exists a unique field embedding $i:\ell\to k$, which is continuous, and a corresponding unique $k$-algebraic groups morphism ${\bf H}\to {\bf G}$
such that $\theta$ coincides with the precomposition of the corresponding map ${\bf H}(k)\to {\bf G}(k)$ with the injection $H<{\bf H}(\ell)\to {\bf H}(k)$.

Furthermore, if $\ell$ is a local field then the assumption that ${\bf H}$ has no $\ell$-anisotropic factors could be replaced by the assumption that the image of $\theta$ is unbounded in ${\bf G}(k)$.
\end{prop}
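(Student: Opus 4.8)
The plan is to reduce the statement to the abstract rigidity-of-field-embeddings theorem of Borel--Tits (as packaged in \cite{margulis-book}), and the main work is to put ourselves in a position where that machinery applies. First I would reduce the target side: since $\mathbf{G}$ is connected, adjoint and absolutely simple, and $\theta(H)$ is Zariski dense, the image determines everything, so there is no loss in working directly with the pair $(\mathbf{H},\mathbf{G})$. On the source side, the key point is that $H$ contains $\mathbf{H}(\ell)^+$, which is Zariski dense in $\mathbf{H}$ (because $\mathbf{H}$ has no $\ell$-anisotropic factors and is connected semisimple), and is generated by unipotent $\ell$-subgroups. The restriction $\theta|_{\mathbf{H}(\ell)^+}$ is then an abstract homomorphism with Zariski-dense image, and by the Borel--Tits theory of abstract homomorphisms of the groups of rational points of isotropic semisimple groups (see \cite[Chapter~I, \S1.4--1.8]{margulis-book}) any such homomorphism factors, up to the ambient $k$-points, through a field homomorphism $i:\ell\to k$ and a $k$-morphism of algebraic groups $\mathbf{H}_i\to\mathbf{G}$, where $\mathbf{H}_i$ is the base change of $\mathbf{H}$ along $i$. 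Uniqueness of $i$ and of the algebraic morphism follows from Zariski density together with the fact that a field embedding is determined by the induced map on the (Zariski-dense) unipotent subgroups.

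Next I would upgrade this abstract factorization to a \emph{continuous} one. The field embedding $i:\ell\to k$ produced above is a priori only an abstract ring homomorphism; its continuity must be extracted from the hypothesis that $\theta$ is continuous for the analytic topologies. The mechanism: restrict to a one-parameter unipotent $\ell$-subgroup $\mathbf{U}_\alpha<\mathbf{H}$ whose $\ell$-points are parametrized by $(\ell,+)$; then $\theta$ restricted to $\mathbf{U}_\alpha(\ell)$ is, after the factorization, literally $x\mapsto u(i(x))$ for a unipotent one-parameter subgroup $u$ of $\mathbf{G}(k)$, and continuity of $\theta$ together with the fact that $u$ is a closed embedding of $(k,+)$ forces $i:\ell\to k$ to be continuous, hence (as both fields carry complete absolute values) an isometry onto its image up to a power of the valuation. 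Once $i$ is continuous, $k$ becomes a valued extension of $i(\ell)$; the induced map $\mathbf{H}(\ell)\to\mathbf{H}(k)$ and the algebraic morphism $\mathbf{H}_i\to\mathbf{G}$ are morphisms of $k$-varieties, hence continuous for the analytic topologies, and one checks $\theta$ agrees with the composite on the Zariski-dense, analytically-dense subgroup $\mathbf{H}(\ell)^+$, hence everywhere on $H$ by continuity. (Here, if $\mathbf{H}$ is not absolutely simple, one first passes through restriction of scalars as in Proposition~\ref{prop:extscal} to reduce to the absolutely simple case, or simply applies the Borel--Tits statement factor-by-factor.)

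Finally, for the ``furthermore'' clause, when $\ell$ is a local field I would replace the no-anisotropic-factors hypothesis by unboundedness of $\theta(H)$ as follows: if $\mathbf{H}$ had an $\ell$-anisotropic almost-simple factor $\mathbf{H}'$, then $\mathbf{H}'(\ell)$ is compact, and $\mathbf{H}'(\ell)$ centralizes the product of the remaining factors and meets $\mathbf{H}(\ell)^+$ trivially on that factor; one argues that the contribution of $\mathbf{H}'(\ell)$ to $\theta(H)$ is a bounded normal-ish piece, so that either $\theta$ kills $\mathbf{H}'$ (and we may quotient it out, landing back in the no-anisotropic-factors situation) or $\theta(H)$ would be forced to have a nontrivial compact Zariski-dense-in-its-closure anisotropic piece, contradicting $k$-simplicity of $\mathbf{G}$ together with unboundedness, i.e.\ contradicting that the full image is both unbounded and Zariski dense. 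Either way we reduce to the first case. The main obstacle I anticipate is precisely the continuity upgrade of the field embedding $i$ and the careful bookkeeping needed to pass from $\mathbf{H}(\ell)^+$ to the intermediate group $H$ and to handle the non-absolutely-simple case via restriction of scalars; the algebraic skeleton of the argument is exactly Borel--Tits and is quoted wholesale from \cite{margulis-book}.
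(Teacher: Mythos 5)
Your treatment of the first paragraph of the statement takes a more laborious route than the paper, which simply invokes \cite[Proposition VII.5.3(a)]{margulis-book} and observes that the local-field hypothesis there enters only through \cite[Remark I.1.8.2(IIIa)]{margulis-book}, which holds for arbitrary complete valued fields. What you propose -- restricting to ${\bf H}(\ell)^+$, applying the algebraic Borel--Tits factorization, and then bootstrapping continuity of the field embedding $i$ from continuity of $\theta$ on one-parameter unipotent subgroups -- is essentially a re-proof of that proposition; it is a legitimate plan, but you should be aware you are redoing work that is quoted wholesale, and your sketch glosses over the points where that proof is delicate (the exact shape of the Borel--Tits factorization, special isogenies in positive characteristic, and the restriction-of-scalars bookkeeping when ${\bf H}$ is not absolutely simple).

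The real content of the paper's proof is the ``furthermore'' clause, and there your argument has two concrete gaps. First, the dichotomy ``either $\theta$ kills the anisotropic factor or one gets a contradiction with unboundedness and simplicity'' is asserted rather than proved; the actual mechanism is cleaner and different: with $H_a={\bf H}_a(\ell)\cap H$ and $H^+={\bf H}(\ell)^+$ commuting normal subgroups of $H$, the Zariski closures of $\theta(H_a)$ and $\theta(H^+)$ are commuting normal algebraic subgroups of the adjoint simple group ${\bf G}$; since $H^+$ is \emph{cocompact} in $H$ and $\theta(H)$ is unbounded, $\theta(H^+)$ is unbounded, hence its Zariski closure is all of ${\bf G}$, forcing $\theta(H_a)$ to be central and therefore trivial. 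Your sketch never uses cocompactness of $H^+$, which is the step that makes the argument work. Second, you do not address uniqueness in this case, and it does not follow from your Zariski-density argument: when ${\bf H}$ has anisotropic factors, ${\bf H}(\ell)^+$ is \emph{not} Zariski dense in ${\bf H}$, so two $k$-morphisms ${\bf H}\to{\bf G}$ agreeing on $H$ need a separate argument to be shown equal. The paper handles this by noting that $H$ has cofinite volume in ${\bf H}(\ell)$ (via \cite[Proposition I.2.3.1(b)]{margulis-book}), applying \cite[Theorem 6.3]{BF-products} to conclude the two continuous homomorphisms agree on all of ${\bf H}(\ell)$, and then using Zariski density of ${\bf H}(\ell)$ in ${\bf H}$. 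You would need to supply both of these steps to complete your proof.
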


\begin{proof}
The first paragraph is proven in \cite[Proposition VII.5.3(a)]{margulis-book}.
Note that in this proof the fields are assumed to be local fields, but this assumption 
is used in the proof only via
\cite[Remark I.1.8.2(IIIa)]{margulis-book}, which applies equally well for complete valued fields.

Assume now that $\ell$ is a local field and $\theta(H)$ is unbounded in ${\bf G}(k)$.
We first remark that the field embedding $i:\ell\to k$, if exists, is unique as it must coincide with the corresponding unique field embedding
we get by replacing ${\bf H}$ by the group ${\bf H}_i$ given by the almost direct product of all $\ell$-istotropic simple factors of ${\bf H}$,
replacing $H$ by the intermediate closed group ${\bf H}_i(\ell)^+< H\cap {\bf H}_i(\ell)<{\bf H}_i(\ell)$ and replacing $\theta$ by its restriction to $H\cap {\bf H}_i(\ell)$,
noting that indeed $H\cap {\bf H}_i(\ell)$ contains ${\bf H}_i(\ell)^+$ by \cite[I.1.5.4(iv)]{margulis-book}.
Second, we remark that the corresponding $k$-algebraic groups morphism ${\bf H}\to {\bf G}$, if exists, is unique.
To see this, we will assume $i$ is given and $\alpha,\beta:{\bf H}\to {\bf G}$ are two such corresponding $k$-morphisms and argue to show that $\alpha=\beta$.
By definition, $\alpha|_H=\beta|_H$. 
By \cite[Proposition I.2.3.1(b)]{margulis-book}, $H^+$ is closed cocompact normal subgroup in ${\bf H}(\ell)$.
It follows that $H$ is a cofinite volume subgroup of ${\bf H}(\ell)$.
Applying \cite[Theorem 6.3]{BF-products} we get that  $\alpha|_{{\bf H}(\ell)}=\beta|_{{\bf H}(\ell)}$
(note that in this reference we regard lattices, but the proof applies equally well to closed subgroups of cofinite volume).
By \cite[Corollary 18.3]{borel} we have that ${\bf H}(\ell)$ is Zariski dense in ${\bf H}$, thus indeed $\alpha=\beta$.
In the sequel we will argue to show the existence of the 
field embedding $i:\ell\to k$ and the corresponding $k$-algebraic groups morphism ${\bf H}\to {\bf G}$.

We let ${\bf H}_a$ be the almost direct products of all $\ell$-anisotropic factors of ${\bf H}$ and denote $H_a={\bf H}_a(\ell)\cap H$.
We also consider the group $H^+={\bf H}(\ell)^+$.
Note that $H_a$ and $H^+$ are commuting normal subgroups of $H$, 
thus 
the Zariski closures of the images of these groups under $\theta$ are commuting algebraic normal subgroups of the simple group ${\bf G}$.
As $H^+$ is cocompact in $H$, its image under $\theta$ is unbounded and in particular non-trivial.
It follows that $\theta(H^+)$ is Zariski dense in ${\bf G}$, and thus $\theta(H_a)$ is central, hence trivial.
We set ${\bf H}'={\bf H}/{\bf H}_a$ and let $H'$ be the image of $H$ in ${\bf H}'$ under the quotient map ${\bf H}(\ell) \to {\bf H}/{\bf H}_a(\ell)$.
We get that $\theta$ factors via $\theta':H'\to {\bf G}(k)$.
Note that ${\bf H}'$ has no $\ell$-anisotroic factors and that $H'$ contains the group ${\bf H}'(\ell)^+$,
by \cite[I.1.5.4(iv) and I1.5.5]{margulis-book}.
We conclude having a continuous $i:\ell\to k$ and a corresponding $k$-algebraic group morphism ${\bf H}'\to {\bf G}$
such that $\theta'$ coincides with the precomposition of the corresponding map ${\bf H}'(k)\to {\bf G}(k)$ with the injection $H'<{\bf H}'(\ell)\to {\bf H}'(k)$.
We are done by considering the composed $k$-morphism ${\bf H}\to {\bf H}' \to {\bf G}$,
noting that indeed $\theta$ coincides with the precomposition of the corresponding map ${\bf H}(k)\to {\bf G}(k)$ with the injection $H<{\bf H}(\ell)\to {\bf H}(k)$.
\end{proof}

Combining Proposition~\ref{prop:explain} with Proposition~\ref{prop:extscal} we readily get the following result, in which the target group ${\bf G}$ is assumed to be $k$-simple rather than absolutely simple.

\begin{cor} \label{cor:cont}
Let $\ell$ and $k$ be fields endowed with complete absolute values.
Let $\mathbf{H}$ be a connected, semi-simple algebraic group defined over $\ell$.
Assume that ${\bf H}$ has no $\ell$-anisotropic factors. 
Let $H$ be an intermediate closed subgroup ${\bf H}(\ell)^+<H<{\bf H}(\ell)$.
Let $\mathbf{G}$ be a connected, adjoint, $k$-simple algebraic group defined over $k$.
Let $\theta:H \to {\bf G}(k)$ be a group homomorphism which is continuous with respect to the corresponding analytic topologies and its image is Zariski dense in ${\bf G}$.
Then there exists a group homomorphism $\hat\theta:{\bf H}(\ell)\to {\bf G}(k)$ which is continuous with respect to the corresponding analytic topologies
such that $\theta=\hat\theta|_H$.

Furthermore, if $\ell$ is a local field then the assumption that ${\bf H}$ has no $\ell$-anisotropic factors could be replaced by the assumption that the image of $\theta$ is unbounded in ${\bf G}(k)$.
\end{cor}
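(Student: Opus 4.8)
The plan is to deduce this from Proposition~\ref{prop:explain} by a restriction-of-scalars manoeuvre, exactly as the sentence preceding the statement suggests. Since $\mathbf{G}$ is connected, adjoint and $k$-simple, Proposition~\ref{prop:extscal} furnishes a finite field extension $k'/k$, equipped with the extended (still complete) absolute value, and a connected, adjoint, absolutely simple $k'$-group $\mathbf{G}'$ with $\mathbf{G}\cong R_{k'/k}\mathbf{G}'$; the induced map $\mathbf{G}(k)\to\mathbf{G}'(k')$ is a group isomorphism which, the absolute values being complete, is moreover a homeomorphism for the analytic topologies. (Proposition~\ref{prop:extscal} is phrased for $k$-isotropic $\mathbf{G}$, but this hypothesis is only carried along to record isotropy of $\mathbf{G}'$; the identification $\mathbf{G}(k)\simeq\mathbf{G}'(k')$, which is all we use, holds for every $k$-simple $\mathbf{G}$.) Composing $\theta$ with this homeomorphism produces a continuous homomorphism $\theta':H\to\mathbf{G}'(k')$.

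Next I would verify that $\theta'(H)$ is Zariski dense in $\mathbf{G}'$. After extension of scalars to an algebraic closure the canonical morphism $\mathbf{G}\to\mathbf{G}'$ becomes the projection of a product of conjugates of $\mathbf{G}'$ onto one factor, hence is dominant, and a dominant morphism carries a Zariski dense set to a Zariski dense set; so $\theta'(H)$, being the image of the Zariski dense group $\theta(H)$, is Zariski dense in $\mathbf{G}'$. Now Proposition~\ref{prop:explain} applies to $\theta'$: it yields a unique continuous field embedding $i:\ell\to k'$ and a $k'$-morphism $\mathbf{H}\times_{\ell,i}k'\to\mathbf{G}'$ such that $\theta'$ is the restriction to $H$ of the composite $\mathbf{H}(\ell)\to\mathbf{H}(k')\to\mathbf{G}'(k')$, where the first arrow is induced by $i$. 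I would then define $\hat\theta:\mathbf{H}(\ell)\to\mathbf{G}(k)$ to be the composite
\[
	\mathbf{H}(\ell)\longrightarrow\mathbf{H}(k')\longrightarrow\mathbf{G}'(k')\longrightarrow\mathbf{G}(k),
\]
in which the first arrow is induced by $i$ (continuous because $i$ is continuous and $\mathbf{H}$ is affine), the second is induced by the $k'$-morphism above (continuous for the analytic topologies), and the third is the homeomorphism $\mathbf{G}'(k')\simeq\mathbf{G}(k)$ of Proposition~\ref{prop:extscal}. By construction $\hat\theta$ is a continuous homomorphism and $\hat\theta|_H=\theta$.

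For the final clause, assume $\ell$ is a local field and $\theta(H)$ is unbounded in $\mathbf{G}(k)$. Since $\mathbf{G}(k)\simeq\mathbf{G}'(k')$ as topological groups, $\theta'(H)$ is then unbounded in $\mathbf{G}'(k')$, so one may invoke the ``furthermore'' part of Proposition~\ref{prop:explain} in place of its first part, and the same construction delivers $\hat\theta$ without assuming that $\mathbf{H}$ has no $\ell$-anisotropic factors. Since the two substantive inputs (Propositions~\ref{prop:extscal} and \ref{prop:explain}) are available, the argument is essentially bookkeeping; the only mild subtleties are the behaviour of the analytic topology under restriction of scalars---already absorbed into Proposition~\ref{prop:extscal} via Lemma~\ref{lem:extscal}---and the transfer of Zariski density along $\mathbf{G}\to\mathbf{G}'$, and I do not anticipate a genuine obstacle.
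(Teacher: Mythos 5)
Your proof is correct and follows exactly the route the paper intends: the corollary is stated as following ``readily'' from combining Proposition~\ref{prop:extscal} (restriction of scalars reducing to an absolutely simple $\mathbf{G}'$ over $k'$) with Proposition~\ref{prop:explain}, and you have simply filled in the bookkeeping, including the two points worth checking (that the identification $\mathbf{G}(k)\simeq\mathbf{G}'(k')$ does not really need $k$-isotropy, and that Zariski density transfers along the dominant morphism $\mathbf{G}\to\mathbf{G}'$). Nothing further is needed.
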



\section{Deducing Theorem~\ref{thm:margulis} from Theorem~\ref{thm:main}} \label{sec:margulis}

We let $k$ be a complete valued field and let $\mathbf{G}$ be a
connected, adjoint, $k$-simple $k$-algebraic group.
Assuming having $\Gamma<H$ and $\rho:\Gamma\to {\bf G}(k)$ as in the theorem,
by \cite[Lemma 6.3]{BF-products} we have that a continuous homomorphism $\hat{\rho}:H \to G$ such that $\hat{\rho}|_\Gamma=\rho$, if exists, is uniquely given.
Below we will prove its existence.

We fix a local field $\ell$.
We let $\mathcal{H}$ be the collection of all connected semi-simple $\ell$-algebraic groups $\mathbf{H}$
which satisfy the theorem, that is: for every lattice $\Gamma$ in $H=\mathbf{H}(\ell)$, whose
projection modulo $\mathbf{N}(\ell)$ for each proper normal $\ell$-isotropic subgroup $\mathbf{N}\lhd \mathbf{H}$ is non-discrete,
and every
homomorphism $\rho:\Gamma \to G$ with unbounded and Zariski dense image $\rho(\Gamma)$ in $\mathbf{G}$,  
there exists a continuous homomorphism $\hat\rho:H\to G$
satisfying $\rho=\hat\rho|_{\Gamma}$.
We argue to show that $\mathcal{H}$ contains all connected semi-simple $\ell$-algebraic groups of $\ell$-rank at least two.

Given a connected semi-simple $\ell$-algebraic group $\mathbf{H}$, 
let us denote by $\bar{\mathbf{H}}$ the associated adjoint group and by $p:\mathbf{H}\to \bar{\mathbf{H}}$ the corresponding $\ell$-isogeny.
The group $\bar{\mathbf{H}}$ is $\ell$-isomorphic to the product of its factors.
Let us denote by $\bar{\mathbf{H}}_0$ the product of all $\ell$-isotropic factors of $\bar{\mathbf{H}}$ 
and let $q:\bar{\mathbf{H}}\to \bar{\mathbf{H}}_0$ be the corresponding projection.
We claim that if $\bar{\mathbf{H}}_0$ is in $\mathcal{H}$, then so is $\mathbf{H}$.
First note that both maps $p:\mathbf{H}(\ell)\to \bar{\mathbf{H}}(\ell)$ and $q:\bar{\mathbf{H}}(\ell)\to \bar{\mathbf{H}}_0(\ell)$ 
have compact kernels and closed cocompact images.
This is clear for the projection $q$, and for $p$ is follows from \cite[Proposition I.2.3.3(i)]{margulis-book}.
In particular, we get that $q\circ p:\mathbf{H}(\ell)\to \bar{\mathbf{H}}_0(\ell)$ has a compact kernel and a closed cocompact image.
Thus, if $\Gamma<\mathbf{H}(\ell)$ is a lattice, then $q\circ p(\Gamma)<\bar{\mathbf{H}}_0(\ell)$ is a lattice, 
and $\Lambda=\ker q\circ p|_\Gamma$ is a finite normal subgroup of $\Gamma$.
Note that for each proper normal $\ell$-isotropic subgroup $\mathbf{N}\lhd \bar{\mathbf{H}}_0$, the projection of $q\circ p(\Gamma)$ modulo
$\mathbf{N}(\ell)$  is non-discrete,
if the corresponding assumption applies to $\Gamma<\mathbf{H}(\ell)$.
Note also that $\Lambda$ is in the kernel of any Zariski dense homomorphism $\rho:\Gamma\to G$, by \cite[Theorem I.1.5.6(i)]{margulis-book},
as $\rho(\Lambda)$ is a finite normal subgroup in the adjoint group $\mathbf{G}$.
It follows that $\rho$ factors through $q\circ p(\Gamma)$, and this proves our claim.
Noting that the groups $\mathbf{H}$ and $\bar{\mathbf{H}}_0$ have equal $\ell$-ranks,
we are left to show that $\mathcal{H}$ contains all connected semi-simple $\ell$-algebraic groups of $\ell$-rank at least two 
which are adjoint and have no $\ell$-anisotropic factors. We proceed to do so.

Assume first that $\mathbf{H}$ is adjoint and it has at least two $k$-isotropic factors.
Up to replacing $\mathbf{H}$ by an $\ell$-isomorphic copy, we assume that $\mathbf{H}=\mathbf{H}_1\times \mathbf{H}_2$,
where the groups $\mathbf{H}_1$ and $\mathbf{H}_2$ are connected, semi-simple, isotropic, adjoint $\ell$-algebraic groups.
We fix a lattice $\Gamma$ in $H=\mathbf{H}(\ell)$
which projection modulo
$\mathbf{N}(\ell)$ for each proper normal $\ell$-isotropic subgroup $\mathbf{N}\lhd \bar{\mathbf{H}}_0$ is non-discrete,
and a homomorphism $\rho:\Gamma \to G$ such that $\rho(\Gamma)$ is unbounded and Zariski dense in $\mathbf{G}$,
and argue to show that
there exists a continuous homomorphism $\hat\rho:H\to G$
satisfying $\rho=\hat\rho|_{\Gamma}$.
We let $\pr_i:\mathbf{H}\to \mathbf{H}_i$ be the corresponding projections, denote by $H'_i$ the closure of $\pr_i(\Gamma)$ in $\mathbf{H}_i(\ell)$
and let $H'=H'_1\times H'_2$.
Therefore $\Gamma<H'$ is a lattice with dense projections in the sense of \cite[Definition 1.1]{BF-products},
thus by \cite[Theorem 1.2]{BF-products} there exists a continuous homomorphism $\hat\rho':H'\to G$
satisfying $\rho=\hat\rho'|_{\Gamma}$.
By \cite[Theorem~II.6.7(a)]{margulis-book} we have that ${\bf H}(\ell)^+<H'<{\bf H}(\ell)$
and by Corollary~\ref{cor:cont},
there exists a continuous homomorphism $\hat\rho:H\to G$
satisfying $\hat\rho'=\hat\rho|_{H'}$. We conclude that $\rho=\hat\rho|_{\Gamma}$,
which finishes the proof in this case.

We are left with the case that $\mathbf{H}$ is $\ell$-simple and of $\ell$-rank at least two.
We fix a lattice $\Gamma$ in $H=\mathbf{H}(\ell)$ and note that by \cite[Theorem III.5.7(b)]{margulis-book}
$\Gamma$ has a finite abelianization.
We set $H^+={\bf H}(\ell)^+$, $\Gamma^+=\Gamma\cap H^+$ and conclude by \cite[Theorem I.2.3.1(c)]{margulis-book} that $\Gamma^+<\Gamma$ is of finite index.
In particular, $\Gamma^+<H$ is a lattice, thus $\Gamma^+$ is also a lattice in the closed subgroup $H^+$ in which it is contained.
We now set $S=H^+$ and note that it satisfies conditions (A) and (B):
condition (B) follows from the assumption of higher rank and condition (A)
follows from \cite[Theorem 6.1]{Bader-Gelander}.
By Theorem~\ref{thm:main} there exists a continuous homomorphism $\hat\rho^+:H^+\to G$ such that $\rho|_{\Gamma^+}=\hat{\rho}^+|_{\Gamma^+}$.
By Corollary~\ref{cor:cont} we get a continuous homomorphism $\hat\rho:H\to G$ such that $\hat\rho|_{H^+}=\hat\rho^+$.
Considering $\Gamma^+$ as a lattice in $\Gamma$ and noting that $\hat\rho|_\Gamma,\rho:\Gamma\to G$ 
are two homomorphisms that coincide on $\Gamma^+$, we deduce by \cite[Lemma 6.3]{BF-products} that
$\hat\rho|_\Gamma=\rho$. This finishes the proof.


\section{A fine version of Theorem~\ref{thm:margulis}} \label{sec:finer}

Taking $\theta=\hat\rho$ in Proposition~\ref{prop:explain} in case the target group ${\bf G}$ is absolutely simple and using Proposition~\ref{prop:extscal} otherwise,
we get the following corollary of Theorem~\ref{thm:margulis}, which could be viewed as a finer version of this theorem.

\begin{cor} \label{cor:finer}
In the setting of Theorem~\ref{thm:margulis},
if ${\bf G}$ is absolutely simple then 
there exist a unique field embedding $i:\ell\to k$, which is continuous, and a corresponding unique $k$-algebraic groups morphism ${\bf H}\to {\bf G}$
such that $\rho$ coincides with the precomposition of the corresponding map ${\bf H}(k)\to {\bf G}(k)$ with the injection $\Gamma<{\bf H}(\ell)\to {\bf H}(k)$.

In the general case, where ${\bf G}$ is merely $k$-simple,
considering the finite field extension $k'$ of $k$
and the $k'$-algebraic group $\mathbf{G}'$ given in Proposition~\ref{prop:extscal},
there exist a unique field embedding $i:\ell\to k'$ which is continuous and a corresponding unique $k$-algebraic group morphism ${\bf H}\to {\bf G}'$
such that $\rho$ coincides with the composition of the corresponding maps $\Gamma<{\bf H}(\ell)$, ${\bf H}(\ell)\to {\bf H}(k')$, ${\bf H}(k')\to {\bf G}'(k')$ and ${\bf G}'(k')\simeq {\bf G}(k)$.
\end{cor}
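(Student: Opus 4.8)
The plan is to combine Theorem~\ref{thm:margulis} with Proposition~\ref{prop:explain}, treating the merely $k$-simple case by first passing through the restriction of scalars supplied by Proposition~\ref{prop:extscal}. First I would apply Theorem~\ref{thm:margulis} to obtain the continuous homomorphism $\hat\rho:H\to G$ with $\hat\rho|_\Gamma=\rho$. Since $\rho(\Gamma)=\hat\rho(\Gamma)\subseteq\hat\rho(H)$, the image of $\hat\rho$ is still Zariski dense in $\mathbf{G}$ and unbounded in $G$; these are exactly the properties needed to feed $\hat\rho$ into Proposition~\ref{prop:explain}.

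Assume first that $\mathbf{G}$ is absolutely simple. I would then invoke Proposition~\ref{prop:explain} with $\theta=\hat\rho$ and $H=\mathbf{H}(\ell)$: the intermediacy condition $\mathbf{H}(\ell)^+<H<\mathbf{H}(\ell)$ is automatic, $\hat\rho$ is continuous with Zariski dense image, and although $\mathbf{H}$ may have $\ell$-anisotropic factors we are in the situation covered by the ``Furthermore'' clause, since $\ell$ is a local field and $\hat\rho(H)$ is unbounded. This yields a unique continuous field embedding $i:\ell\to k$ and a unique corresponding $k$-morphism $\mathbf{H}\to\mathbf{G}$ through which $\hat\rho$, and hence $\rho=\hat\rho|_\Gamma$, factors as asserted. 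For uniqueness at the level of $\rho$: any second such pair would induce another continuous homomorphism $\mathbf{H}(\ell)\to\mathbf{G}(k)$ extending $\rho$, which must coincide with $\hat\rho$ by the uniqueness assertion of Theorem~\ref{thm:margulis} (equivalently \cite[Lemma~6.3]{BF-products}), and then the pair is forced by the uniqueness part of Proposition~\ref{prop:explain}.

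For the general case, with $\mathbf{G}$ only $k$-simple, I would use Proposition~\ref{prop:extscal} to write $\mathbf{G}$ as the restriction of scalars of a connected, adjoint, absolutely simple $k'$-group $\mathbf{G}'$, where $k'/k$ is finite and the natural morphism $\mathbf{G}\to\mathbf{G}'$ induces a topological group isomorphism $\mathbf{G}(k)\simeq\mathbf{G}'(k')$. Composing $\hat\rho$ with this isomorphism gives a continuous homomorphism $H\to\mathbf{G}'(k')$ whose image is unbounded (the isomorphism is a homeomorphism) and Zariski dense in $\mathbf{G}'$ (a proper $k'$-subgroup of $\mathbf{G}'$ containing the image would, via restriction of scalars, produce a proper $k$-subgroup of $\mathbf{G}$ containing $\rho(\Gamma)$). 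Proposition~\ref{prop:explain}, applied over $k'$ with target $\mathbf{G}'$, then yields the unique continuous field embedding $i:\ell\to k'$ and the unique corresponding $k'$-morphism $\mathbf{H}\to\mathbf{G}'$; transporting the resulting factorisation back through $\mathbf{G}'(k')\simeq\mathbf{G}(k)$ gives the stated description of $\rho$, with uniqueness inherited exactly as in the absolutely simple case.

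The whole argument is essentially bookkeeping on top of Theorem~\ref{thm:margulis}, Proposition~\ref{prop:explain} and Proposition~\ref{prop:extscal}, so I do not expect a genuine obstacle. The points that need care are: confirming that $\hat\rho$ inherits unboundedness and Zariski density from $\rho$, so that the ``Furthermore'' clause of Proposition~\ref{prop:explain} is available and no anisotropy hypothesis on $\mathbf{H}$ is required --- this is precisely where locality of $\ell$ is used; and, in the $k$-simple case, checking that the restriction-of-scalars identification of Proposition~\ref{prop:extscal} carries Zariski density of $\rho(\Gamma)$ in $\mathbf{G}$ over to Zariski density in $\mathbf{G}'$.
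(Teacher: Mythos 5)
Your proposal is correct and follows exactly the route the paper itself takes: the paper's entire proof is the sentence preceding the corollary, namely "taking $\theta=\hat\rho$ in Proposition~\ref{prop:explain} in case ${\bf G}$ is absolutely simple and using Proposition~\ref{prop:extscal} otherwise." Your added verifications (that $\hat\rho$ inherits unboundedness and Zariski density from $\rho$, that the ``Furthermore'' clause handles possible $\ell$-anisotropic factors, and that Zariski density transports through the restriction-of-scalars identification) are precisely the bookkeeping the authors leave implicit.
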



\begin{bibdiv}
\begin{biblist}


\bib{BDL}{article}{
	AUTHOR = {Uri Bader}, 
	Author={Bruno Duchesne}, 
	Author={Jean Lecureux},
     TITLE = {Almost algebraic actions of algebraic groups and applications
              to algebraic representations},
   JOURNAL = {Groups Geom. Dyn.},
  FJOURNAL = {Groups, Geometry, and Dynamics},
    VOLUME = {11},
      YEAR = {2017},
    NUMBER = {2},
     PAGES = {705--738},
      ISSN = {1661-7207},
   MRCLASS = {20G15 (14L30 37C40)},
  MRNUMBER = {3668057},
       DOI = {10.4171/GGD/413},
       URL = {https://doi.org/10.4171/GGD/413},
}

\bib{AREA}{article}{
   author = {Bader, Uri}, author={Furman, Alex},
    title = {Algebraic Representations of Ergodic Actions and Super-Rigidity},
  journal = {ArXiv e-prints},
   eprint = {1311.3696},
     year = {2013},
   adsurl = {http://adsabs.harvard.edu/abs/2013arXiv1311.3696B},
  adsnote = {Provided by the SAO/NASA Astrophysics Data System}
}

\bib{BF-products}{article}{
AUTHOR = {Bader, Uri}, Author={Furman, Alex},
    title = {Super-Rigidity and non-linearity for lattices in products},
  journal = {ArXiv e-prints},
   eprint = {1802.09931},
     year = {2018},
   adsurl = {http://adsabs.harvard.edu/abs/2018arXiv180209931B},
  adsnote = {Provided by the SAO/NASA Astrophysics Data System}
}

\bib{Bader-Gelander}{article}{
    AUTHOR = {Bader, Uri},
    AUTHOR = {Gelander, Tsachik},
     TITLE = {Equicontinuous actions of semisimple groups},
   JOURNAL = {Groups Geom. Dyn.},
  FJOURNAL = {Groups, Geometry, and Dynamics},
    VOLUME = {11},
      YEAR = {2017},
    NUMBER = {3},
     PAGES = {1003--1039},
      ISSN = {1661-7207},
   MRCLASS = {22E46 (22D40)},
  MRNUMBER = {3692904},
MRREVIEWER = {Luciana A. Alves},
       DOI = {10.4171/GGD/420},
       URL = {https://doi.org/10.4171/GGD/420},
}

\bib{borel}{book}{
    AUTHOR = {Borel, Armand},
     TITLE = {Linear algebraic groups},
    SERIES = {Graduate Texts in Mathematics},
    VOLUME = {126},
   EDITION = {Second},
 PUBLISHER = {Springer-Verlag},
   ADDRESS = {New York},
      YEAR = {1991},
     PAGES = {xii+288},
      ISBN = {0-387-97370-2},
   MRCLASS = {20-01 (20Gxx)},
  MRNUMBER = {1102012 (92d:20001)},
MRREVIEWER = {F. D. Veldkamp},
       DOI = {10.1007/978-1-4612-0941-6},
       URL = {http://dx.doi.org/10.1007/978-1-4612-0941-6},
}

\bib{valuedfields}{article}{
    AUTHOR = {Bosch, S.}, Author={G\"untzer, U.}, author={Remmert, R.},
     TITLE = {Non-{A}rchimedean analysis},
    SERIES = {Grundlehren der Mathematischen Wissenschaften [Fundamental
              Principles of Mathematical Sciences]},
    VOLUME = {261},
      NOTE = {A systematic approach to rigid analytic geometry},
 PUBLISHER = {Springer-Verlag, Berlin},
      YEAR = {1984},
     PAGES = {xii+436},
      ISBN = {3-540-12546-9},
   MRCLASS = {32K10 (30G05 46P05)},
  MRNUMBER = {746961},
MRREVIEWER = {W. Bartenwerfer},
       DOI = {10.1007/978-3-642-52229-1},
       URL = {https://doi.org/10.1007/978-3-642-52229-1},
}

\bib{effros}{article}{
    AUTHOR = {Effros, Edward G.},
     TITLE = {Transformation groups and {$C^{\ast} $}-algebras},
   JOURNAL = {Ann. of Math. (2)},
  FJOURNAL = {Annals of Mathematics. Second Series},
    VOLUME = {81},
      YEAR = {1965},
     PAGES = {38--55},
      ISSN = {0003-486X},
   MRCLASS = {46.65},
  MRNUMBER = {0174987 (30 \#5175)},
MRREVIEWER = {J. M. G. Fell},
}

\bib{GKM}{article}{
    AUTHOR = {Gelander, Tsachik},
author = {Karlsson, Anders},
author = {Margulis, Gregory A.},
     TITLE = {Superrigidity, generalized harmonic maps and uniformly convex
              spaces},
   JOURNAL = {Geom. Funct. Anal.},
  FJOURNAL = {Geometric and Functional Analysis},
    VOLUME = {17},
      YEAR = {2008},
    NUMBER = {5},
     PAGES = {1524--1550},
}

\bib{GW}{article}{
   author={Glasner, Eli},
   author={Weiss, Benjamin},
   title={Weak mixing properties for non-singular actions},
   journal={Ergodic Theory Dynam. Systems},
   volume={36},
   date={2016},
   number={7},
   pages={2203--2217},
}

\bib{margulis-book}{book}{
    AUTHOR = {Margulis, G. A.},
     TITLE = {Discrete subgroups of semisimple {L}ie groups},
    SERIES = {Ergebnisse der Mathematik und ihrer Grenzgebiete (3) [Results
              in Mathematics and Related Areas (3)]},
    VOLUME = {17},
 PUBLISHER = {Springer-Verlag},
   ADDRESS = {Berlin},
      YEAR = {1991},
     PAGES = {x+388},
      ISBN = {3-540-12179-X},
   MRCLASS = {22E40 (20Hxx 22-02 22D40)},
  MRNUMBER = {1090825 (92h:22021)},
MRREVIEWER = {Gopal Prasad},
}

\bib{Monod}{article}{
   AUTHOR = {Monod, Nicolas},
     TITLE = {Superrigidity for irreducible lattices and geometric
              splitting},
   JOURNAL = {J. Amer. Math. Soc.},
  FJOURNAL = {Journal of the American Mathematical Society},
    VOLUME = {19},
      YEAR = {2006},
    NUMBER = {4},
     PAGES = {781--814},
}


\bib{zimmer-book}{book}{
   author={Zimmer, R. J.},
   title={Ergodic theory and semisimple groups},
   series={Monographs in Mathematics},
   volume={81},
   publisher={Birkh\"auser Verlag},
   place={Basel},
   date={1984},
   pages={x+209},
   isbn={3-7643-3184-4},
   review={\MR{776417 (86j:22014)}},
}

\end{biblist}
\end{bibdiv}

\end{document}